\newtheorem{theorem}{Theorem}
\newtheorem{lemma}[theorem]{Lemma}
\newtheorem{proposition}[theorem]{Proposition}
\theoremstyle{definition}
\newtheorem{remark}[theorem]{Remark}
\newcommand{\C}{{\mathbb C}}
\newcommand{\Z}{{\mathbb Z}}
\newcommand{\red}[1]{{\color{red}#1}}
\newcommand{\geh}{\mathfrak{g}}
\newcommand{\heh}{\mathfrak{h}}
\newcommand{\neh}{\mathfrak{n}}
\newcommand{\ep}{\epsilon}
\newcommand{\La}{\Lambda}
\newcommand{\ot}{\otimes}
\newcommand{\pair}[1]{\langle{#1}\rangle}
\begin{document}

\title[]{Parafermionic bases of standard modules for twisted affine Lie algebras of type $A_{2l-1}^{(2)}$, $D_{l+1}^{(2)}$, $E_6^{(2)}$ and $D_4^{(3)}$}

\author{MASATO OKADO}
\address{Department of Mathematics, Osaka City University, Osaka 558-8585, Japan}
\email{okado@osaka-cu.ac.jp}

\author{RYO TAKENAKA}
\address{Department of Mathematics, Osaka City University, Osaka 558-8585, Japan}
\email{m20sa023@zp.osaka-cu.ac.jp}

\begin{abstract}
Using the bases of principal subspaces for twisted affine Lie algebras except $A_{2l}^{(2)}$
by Butorac and Sadowski, we construct bases of the highest weight modules of highest
weight $k\La_0$ and parafermionic spases for the same affine Lie algebras. As a result,
we obtain their character formulas conjectured in \cite{HKOTT}.
\end{abstract}

\maketitle

\section{Introduction}

In 1980's, the attempts to obtain combinatorial bases of the integrable 
highest weight modules or
their (coset) subspaces using vertex operators were initiated in the seminal work of
Lepowsky and Primc \cite{LP}. Let $\geh$ be the simple Lie algebra $sl_2$
and $\heh$ its Cartan subalgebra. Set $\hat{\geh}=\geh\ot\C[t,t^{-1}]\oplus\C c,
\hat{\heh}=\heh\ot\C[t,t^{-1}]\oplus\C c$, where $c$ is the canonical central 
element. Lepowsky and Primc constructed bases of the coset space 
$\hat{\geh}\supset\hat{\heh}$ of integrable highest weight modules build upon vertex 
operator components. This work was generalized by Georgiev to the higher rank case
$\geh=sl_{n+1}$. In \cite{G1}, he first constructed the bases for the pricipal subspaces
studied by Feigin and Stoyanovsky \cite{FS1,FS2}, and then did for the 
$\hat{\geh}\supset\hat{\heh}$ coset subspaces or equivalently the parafermionic 
spaces in \cite{G2}. As a result, he obtained a fermionic character formula of the
integrable highest weight module $L(k_0\La_0+k_j\La_j)$. 

Recently, based on the works on constructing combinatorial bases of the principal 
subspaces for other affine algebras (see e.g.~\cite{B,BK}), Butorac, Ko\v{z}i\'c and 
Primc obtained fermionic formulas for integrable highest weight modules
similar to the above highest weight for all untwisted affine Lie algebras in \cite{BKP},
and thereby proved the so called Kuniba-Nakanishi-Suzuki conjecture \cite{KNS}.
Although the untwisted cases are settled, there are also twisted affine Lie algebras
and a similar conjecture was stated in \cite{HKOTT}. Fortunately,
necessary combinatorial bases for the principal subspaces have already been
obtained recently in \cite{PS1,PS2,BS}. 
The purpose of this paper is to make use of their results to settle the conjecture
in \cite{HKOTT} for the twisted cases. 

This paper is organized as follows. In section 2, we review necessary results. In 
particular, we recall the twisted vertex operator, construction of the level 1 highest
weight representation by it, quasi-particles on higher level representations and the
result by Butorac and Sadowski on the construction of a basis of the principal 
subspace by quasi-particles. We then give our main theorem in section 3 on the
construction of a basis of the standard module $L(k\La_0)$. In section 4, we consider
the so called $\mathcal{Z}$-operator in the twisted case and construct a basis of the
parafermionic space. In the final section 5, we obtain the fermionic character formula 
of the parafermionic space for the twisted cases. Note that the $A_{2l}^{(2)}$ case is not 
included, since it is not treated in \cite{BS} either. We wish to report also in this case
in near future.
%

\section{Preliminaries}

\subsection{Notation on simple Lie algebras of type ADE}

Let $\geh$ be a complex simple Lie algebra of type $A_{\ell}$, $D_{\ell}$ or $E_\ell$ ($\ell=6,7,8$ for $E_\ell$), and let $\alpha_i~(i=1,2,\ldots,\ell)$ be its simple root.
The root lattice of $\geh$ is given by
\[L=\Z\alpha_1\oplus\cdots\oplus\Z\alpha_\ell.\]
Let $\heh$ be the Cartan subalgebra and $\pair{\cdot,\cdot}$ a nondegenerate invariant symmetric bilinear form on $\geh$.
Using this form, we can introduce a symmetric bilinear form $\pair{\cdot,\cdot}$ also
on $\heh^*$ by identifying an element $h$ of $\heh$ with an element of $\alpha_h$ of $\heh^*$ via $\pair{\alpha,\alpha_h}=\alpha(h)$.
We fix the bilinear form $\pair{\cdot,\cdot}$ on $\heh^*$ so that we have $\pair{\alpha,\alpha}=2$ if $\alpha$ is a root.
We take the labeling of the Dynkin diagrams of our Lie algebras as in \cite{BS}.
Let $\nu$ be the Dynkin diagram automorphism of $A_{2l-1}$, $D_{l+1}$, $E_6$ and $D_4$, where the order $r$ of $\nu$ is $2,2,2$ and $3$, respectively. 
The Dynkin diagrams and the autormorphism $\nu$ for each type are given
in Table \ref{tab:Dynkin1}.
\begin{table} 
\begin{eqnarray*}
A_{2l-1}
&\vcenter{\xymatrix@R=3ex{
*{\circ}<3pt> \ar@{-}[r]_<{1} \ar@/_18pt/[dd]_\nu& *{\circ}<3pt> \ar@{-}[r]_<{2} 
& {} \ar@{.}[r]&{}  \ar@{-}[r]_>{\,\,\,\,l-1} & *{\circ}<3pt> \ar@{-}[dr] \\  &&&&&*{\circ}<3pt>\ar@{}_<{l}  \\
*{\circ}<3pt> \ar@{-}[r]_<{2l-1} \ar@/^18pt/[uu]& *{\circ}<3pt> \ar@{-}[r]_<{2l-2} 
& {} \ar@{.}[r]&{}  \ar@{-}[r]_>{\,\,\,\,l+1} & *{\circ}<3pt> \ar@{-}[ur]}} & \\
D_{l+1}
&\vcenter{\xymatrix@R=3ex{
&&&&&*{\circ}<3pt> \ar@{-}[dl]^<{l} \ar@/^18pt/[dd]^\nu \\
*{\circ}<3pt> \ar@{-}[r]_<{\!\!1} & *{\circ}<3pt> \ar@{-}[r]_<{\!\!2}
& {} \ar@{.}[r]&{} \ar@{-}[r]_>{l-1} &
*{\circ}<3pt> & \\
&&&&&*{\circ}<3pt> \ar@{-}[ul]_<{\!\!l+1} \ar@/_18pt/[uu]}} & \\
E_6
&\vcenter{\xymatrix{
&&*{\circ} \ar@{-}[d]^<{\;\,4} \\
*{\circ}<3pt>\ar@{-}[r]_<{\!\!1} \ar@/_18pt/[rrrr]_\nu&*{\circ}<3pt>\ar@{-}[r]_<{\!\!2} &
*{\circ}<3pt>\ar@{-}[r]_<{\!\!3} &*{\circ}<3pt>\ar@{-}[r]_<{\!\!5} &
*{\circ}<3pt>\ar@{}_<{6} \ar@/^18pt/[llll]}}& \\
D_4
&\vcenter{\xymatrix{
&*{\circ} \ar@{-}[d]^<{\raise3ex\hbox{\scriptsize{\;\,4}}} \ar@/_8pt/[dl] \\
*{\circ}<3pt>\ar@{-}[r]_<{\!\!1}\ar@/_18pt/[rr]_\nu &*{\circ}<3pt>\ar@{-}[r]_<{\!\!2} &
*{\circ}<3pt>\ar@{}_<{3} \ar@/_8pt/[ul]}} 
\end{eqnarray*}
\caption{Dynkin diagrams and the automorphism $\nu$ \label{tab:Dynkin1}}
\end{table}

\begin{remark}
The results in this paper remain valid when we take $\geh$ to be one of 
$A_\ell,D_\ell,E_{6,7,8}$ and $\nu=\mathrm{id}$.
\end{remark}

Let $\eta$ be a primitive $r$th root of unity and set $\eta_0=(-1)^r\eta$.
Following \S2 of \cite{CalLM} (see also \cite{BS,PS1,PS2}), we consider two central extensions of $L$
by $\pair{\eta_0}$, denoted by $\hat{L}$ and $\hat{L}_\nu$,
\[
1\longrightarrow\pair{\eta_0}\longrightarrow\hat{L}\;(\text{ or }\hat{L}_\nu)
\mathop{\longrightarrow}^{-} L\longrightarrow1
\]
where $\bar{\phantom{x}}$ stands for the projection to $L$.
$\hat{L}$ and $\hat{L}_\nu$ differ from each other when $r=3$.
Define the functions $C_0,C:L\times L\rightarrow\C^\times$ by
\[
C_0(\alpha,\beta)=(-1)^{\pair{\alpha,\beta}},\qquad C(\alpha,\beta)=\prod_{j=0}^{r-1}(-\eta^j)^{\pair{\nu^j\alpha,\beta}}.
\]
The central extension $\hat{L}$ (resp. $\hat{L}_\nu$) is defined by using the 
commutator map $C_0$ (resp. $C$), namely, for $a,b\in\hat{L}$ (resp. $\hat{L}_\nu$)
$aba^{-1}b^{-1}=C_0(\bar{a},\bar{b})$ (resp. $C(\bar{a},\bar{b}))$. 
Let each commutator map correspond to the 2-cocyle $\ep_{C_0},\ep_C$. 
Namely, for $\ep_{C_0}$, it satisfies 
\[
\ep_{C_0}(\alpha,\beta)\ep_{C_0}(\alpha+\beta,\gamma)=
\ep_{C_0}(\beta,\gamma)\ep_{C_0}(\alpha,\beta+\gamma),\quad
\frac{\ep_{C_0}(\alpha,\beta)}{\ep_{C_0}(\beta,\alpha)}=C_0(\alpha,\beta).
\]
We choose our 2-cocycle $\ep_{C_0}$ to be
\[\ep_{C_0}(\alpha_i,\alpha_j)=
\begin{cases}
1&{\rm if}~i\leq j\\
(-1)^{\pair{\alpha_i,\alpha_j}}&{\rm if}~i>j
\end{cases}.\]
This 2-cocycle satisfies 
\[
\ep_{C_0}(\alpha,\beta)^2=1,\quad \ep_{C_0}(\alpha,\beta)=\ep_{C_0}(\nu\alpha,\nu\beta).
\]
The 2-cocycles $\ep_{C_0}$ and $\ep_C$ are related by
\begin{align}
\label{1}
\ep_{C_0}(\alpha,\beta)=\left(\prod_{-\frac{r}{2}<j<0}(-\eta^{-j})^{\pair{\nu^{j}\alpha,\beta}}\right)\ep_C(\alpha,\beta).
\end{align}
Using the 2-cocycle $\ep_C$, we obtain a normalized section $e:L\rightarrow \hat{L}_\nu$ by
\begin{eqnarray}
\nonumber e:L&\rightarrow&\hat{L}_\nu\\
\nonumber \alpha&\mapsto&e_{\alpha}
\end{eqnarray}
with $e_0=1$, $\overline{e_{\alpha}}=\alpha$ and $e_{\alpha}e_{\beta}=\ep_C(\alpha,\beta)e_{\alpha+\beta}$.
According to \cite{CalLM}, there exists an automorphism $\hat{\nu}$ of $\hat{L}_\nu$ such that
\begin{equation} \label{nu hat}
\overline{\hat{\nu}a}=\nu\bar{a},\qquad\hat{\nu}a=a~{\rm if}~\nu\bar{a}=\bar{a}.
\end{equation}
$\hat{\nu}$ is also an automorphism of $\hat{L}$ satisfying \eqref{nu hat}.
We have $\hat{\nu}^r=1$.

For $j\in\Z$, set
\[\heh_{(j)}=\{h\in\heh\mid\nu h=\eta^jh\}\subset\heh,\]
so that we have
\[\heh=\bigoplus_{j\in\Z/r\Z}\heh_{(j)}.\]
Here we identify $\heh_{(j\text{ mod }r)}$ with $\heh_{(j)}$.
Associated to this decomposition, we define a Lie algebra
\[\hat{\heh}[\nu]=\bigoplus_{m\in\frac1{r}\Z}\heh_{(rm)}\ot t^m\oplus\C c\]
with Lie bracket given by
\[[\alpha\ot t^m,\beta\ot t^n]=\pair{\alpha,\beta}m\delta_{m+n,0}c,\quad [\hat{\heh[}\nu],c]=0\]
for $m,n\in\frac1{r}\Z$ and $\alpha\in\heh_{(rm)}$ and $\beta\in\heh_{(rn)}$.
Consider the subalgebras $\hat{\heh}[\nu]^{\pm}=\bigoplus_{\pm m>0}\heh_{(rm)}\ot t^m$. Then $\hat{\heh}[\nu]_{\frac1{r}\Z}=\hat{\heh}[\nu]^+\oplus\hat{\heh}[\nu]^-\oplus\C c$ is a Heisenberg subalgebra
of $\hat{\heh}[\nu]$. We introduce the induced $\hat{\heh}[\nu]$-module
\[
S[\nu]=U(\hat{\heh}[\nu])\ot_{U(\bigoplus_{m\ge0}\heh_{(rm)}\ot t^m\oplus\C c)}\C,
\]
where $\bigoplus_{m\ge0}\heh_{(rm)}\ot t^m$ acts trivially on $\C$ and $c$ as 1.
$S[\nu]$ is an irreducible $\hat{\heh}[\nu]_{\frac1{r}\Z}$-module, linearly isomorphic to
the symmetric algebra $S(\hat{\heh}[\nu]^-)$.

\subsection{Twisted module and twisted vertex operator} \label{subsec:TVO}

Let $P_j$ be the projection from $\heh$ onto $\heh_{(j)}$ for $j\in\Z/r\Z$.
Following \cite{CalLM} and \cite{L}, we set
\[N=(1-P_0)\heh\cap L=\{\alpha\in L\mid \pair{\alpha,\heh_{(0)}}=0\}.\]
Explicitly, we have 
\[
N=\sum_{i=1}^\ell\Z(\alpha_i-\nu\alpha_i)
\]
when $r=2$, and 
\[
N=\{r_1\alpha_1+r_3\alpha_3+r_4\alpha_4\in L\mid r_1+r_3+r_4=0\}
\]
when $r=3$. See \cite{BS}.

Let $\hat{N}$ be the subgroup of $\hat{L}_\nu$ obtained by pulling back the 
subgroup $N$ of $L$. Then, by Proposition 6.1 in \cite{L}, there is a unique
homomorphism
\[
\tau:\hat{N}\longrightarrow \C^\times
\]
such that 
\[
\tau(\eta_0)=\eta_0,\quad \tau(a\hat{\nu}a^{-1})=\eta^{-\pair{\sum_j\nu^j
\bar{a},\bar{a}}/2}
\]
for $a\in\hat{L}_\nu$. Let $\C_\tau$ be the one-dimensional $\hat{N}$-module
$\C$ with this character $\tau$ and consider the induced $\hat{L}_\nu$-module
\[
U_T=\C[\hat{L}_\nu]\ot_{\C[\hat{N}]}\C_\tau\simeq\C[L/N]\simeq\C[P_0L].
\]
Here the last isomorphism is induced from the projection. The readers are warned 
that $P_0L\not\subset L$ but $P_0L\subset\frac1{r}L$. We have 
\[
U_T=\coprod_{\alpha\in P_0L}(U_T)_\alpha,
\]
where $(U_T)_\alpha=\{u\in U_T\mid hu=\pair{h,\alpha}u\text{ for }h\in\heh_{(0)}\}$ and
\[
a\cdot (U_T)_\alpha\subset (U_T)_{\alpha+\bar{a}_{(0)}}\quad\text{for }a\in \hat{L}_\nu,
\alpha\in P_0L.
\]
$\heh_{(0)}$ acts on $U_T$ as 
\begin{align}
&\alpha\cdot b=\pair{\alpha,\bar{b}}b
\end{align}
for $\alpha\in\heh_{(0)},b\in\hat{L}_\nu$ and we set $[\alpha,a]=
\pair{\alpha,\bar{a}}a$. We also define the action of $z^\alpha$ by
\begin{equation}
z^\alpha\cdot b=z^{\pair{\alpha,\bar{b}}}b
\end{equation}
and $\eta^\alpha$ by $\eta^\alpha\cdot b=
\eta^{\pair{\alpha,\bar{b}}}b$. Then for $a\in\hat{L}_\nu$, we have $z^\alpha a=az^{\alpha+\pair{\alpha,\bar{a}}},
\eta^\alpha a=a\eta^{\alpha+\pair{\alpha,\bar{a}}}$. Moreover, as operators on $U_T$,
we have
\begin{equation} \label{nu hat 2}
\hat{\nu}a=a\eta^{-\sum\nu^p\bar{a}-\pair{\sum\nu^p\bar{a},\bar{a}}/2}.
\end{equation}
%
%
$U_T$ then becomes an $\tilde{\heh}[\nu]$-module by letting the Heisenberg algebra
$\tilde{\heh}[\nu]_{\frac1{r}\Z}$ act trivially. 
We set
\[
V_L^T=S[\nu]\ot U_T
\]
to be a tensor product of $\tilde{\heh}[\nu]$-module on which $\hat{L}_\nu$ acts
by its action on the second component. 
Set $\tilde{\heh}[\nu]=\hat{\heh}[\nu]\oplus\C d$.
$d$ acts on $U_T$ and gives the following grading
\[
d\cdot a=-\frac12\pair{\bar{a},\bar{a}}a\quad\text{for }a\in\hat{L}_\nu.
\]
$V_L^T$ is called the twisted module.

Next we define the $\hat{\nu}$-twisted vertex operator. 
We follow \S2 of \cite{CalLM}.
For each $\alpha\in\heh$ and $m\in\frac1{r}\Z$, let $\alpha_{(j)}$ stand for 
$P_j\alpha\in\heh_{(j)}$. Set 
\begin{align}
\alpha(m)&=\alpha_{(rm)}\ot t^m,\\
E^{\pm}(\alpha,z)&={\rm exp}\left(\sum_{\pm m\in\frac1{r}\Z_+}\frac{\alpha(m)}{m}z^{-m}\right).
\end{align}
Note that from \cite[Proposition 3.4]{LW}, we have the following commutation relation
\begin{equation} \label{comm rel of E}
E^+(\alpha,z_1)E^-(\beta,z_2)=E^-(\beta,z_2)E^+(\alpha,z_1)
\prod_{i=0}^{r-1}\left(1-\eta^i\frac{z_2^{\frac1{r}}}{z_1^{\frac1{r}}}\right)^{\pair{\nu^i\alpha,\beta}}.
\end{equation}
For $a\in\hat{L}_\nu$, as defined in \cite{CalLM,L}, we consider the twisted vertex operator
\begin{equation} \label{tvo}
Y^{\hat{\nu}}(a,z)=r^{-\frac{\pair{\bar{a},\bar{a}}}{2}}\sigma(\bar{a})E^-(-\bar{a},z)E^+(-\bar{a},z)a 
z^{\bar{a}_{(0)}+\frac{\pair{\bar{a}_{(0)},\bar{a}_{(0)}}}2-\frac{\pair{\bar{a},\bar{a}}}2},
\end{equation}
where
\[\sigma(\alpha)=\left\{
\begin{array}{ll}
2^{\pair{\nu\alpha,\alpha}/2}\quad(r=2)\\
(1-\eta^2)^{\pair{\nu\alpha,\alpha}}\quad(r=3)
\end{array}\right.\]
acting on $V_L^T$. (In \cite{CalLM}, this vertex operator is defined for an element
of the lattice vertex operator algebra $V_L$. Since we do not use this fact in this
paper, we simply defined it for $a\in\hat{L}_\nu$.)
We define the component operators $Y^{\hat{\nu}}_{\alpha}(m)$ for $m\in\frac1{r}\Z,\alpha\in L$ by 
\begin{equation}
Y^{\hat{\nu}}(e_\alpha,z)=\sum_{m\in\frac1{r}\Z}Y^{\hat{\nu}}_\alpha(m)z^{-m-\pair{\alpha,\alpha}/2}.
\end{equation}
Set
\begin{equation} \label{rho}
\rho_i=\frac12\pair{(\alpha_i)_{(0)},(\alpha_i)_{(0)}}.
\end{equation}
Then for a simple root $\alpha_i$, we have
\begin{align} \label{1a}
Y^{\hat{\nu}}(\hat{\nu}e_{\alpha_i},z)
=\left.Y^{\hat{\nu}}(e_{\alpha_i},z)\right|_{z^{\frac1{r}}\rightarrow
\eta^{-1}z^{\frac1{r}}}
\end{align}
from \eqref{nu hat 2}. For a component operator, 
\begin{equation}
Y^{\hat{\nu}}_{\nu\alpha_i}(m)=\eta^{rm}Y^{\hat{\nu}}_{\alpha_i}(m).
\end{equation}
From this relation, we know if $\nu\alpha_i=\alpha_i$, then $Y^{\hat{\nu}}_{\alpha_i}(m)=0$ unless $m\in \Z$.
By an explicit calculation, we get, if $n\ne0$,
\begin{equation} \label{[h,Y]}
[h(n),Y^{\hat{\nu}}(e_\alpha,z)]=\pair{h_{(rn)},\alpha_{(-rn)}}z^nY^{\hat{\nu}}(e_\alpha,z).
\end{equation}

\subsection{Twisted affine Lie algebras}
Recall that $\geh$ is a simple Lie algebra of type $A_\ell,D_\ell$ or $E_\ell$. 
Let $\Delta$ be its set of roots.
Then we have the root space decomposition
\[\geh=\heh\oplus\bigoplus_{\alpha\in\Delta}\C x_\alpha.\]
We normalize a root vector $x_\alpha$ so that we have 
\[[x_\alpha,x_\beta]=
\begin{cases}
\ep_{C_0}(\alpha,-\alpha)\alpha&{\rm if}~\alpha+\beta=0\\
\ep_{C_0}(\alpha,\beta)x_{\alpha+\beta}&{\rm if}~\alpha+\beta\in\Delta\\
0&{\rm otherwise}.
\end{cases}\]
Then the symmetric bilinear form $\pair{\cdot,\cdot}$ on $\geh$ reads as
\[\pair{h,x_\alpha}=0,
\qquad\pair{x_\alpha,x_\beta}=
\begin{cases}
\ep_{C_0}(\alpha,-\alpha)&{\rm if}~\alpha+\beta=0,\\
0&{\rm if}~\alpha+\beta\neq0,
\end{cases}\]
where $h\in\heh$.

\begin{table} 
\begin{picture}(400, 80)(-31,-8)
\put(220,20){
\put(40,24){$ D^{(2)}_{l+1}$}
\multiput( 0,0)(20,0){2}{\circle{6}}
\multiput(80,0)(20,0){2}{\circle{6}}
\put(23,0){\line(1,0){14}}
\put(62.5,0){\line(1,0){14}}
\multiput(2.85,-1)(0,2){2}{\line(1,0){14.3}}
\multiput(82.85,-1)(0,2){2}{\line(1,0){14.3}}
\multiput(39,0)(4,0){6}{\line(1,0){2}}
\put(10,0.2){\makebox(0,0){$<$}}
\put(90,0.2){\makebox(0,0){$>$}}
\put(0,-6){\makebox(0,0)[t]{$0$}}
\put(20,-6){\makebox(0,0)[t]{$1$}}
\put(80,-6){\makebox(0,0)[t]{$l\!\! -\!\! 1$}}
\put(100,-6){\makebox(0,0)[t]{$l$}}
}
\put(20,20){
\put(45,24){$ A^{(2)}_{2l-1}$}
\put(6,14){\circle{6}}\put(6,-14){\circle{6}}
\put(20,0){\circle{6}}
\multiput(80,0)(20,0){2}{\circle{6}}
\put(23,0){\line(1,0){14}}
\put(62.5,0){\line(1,0){14}}
\put(18,3){\line(-1,1){9}} \put(18,-3){\line(-1,-1){9}}
\multiput(82.85,-1)(0,2){2}{\line(1,0){14.3}}
\multiput(39,0)(4,0){6}{\line(1,0){2}}
\put(90,0){\makebox(0,0){$<$}}
\put(-2,19){\makebox(0,0)[t]{$0$}}
\put(-2,-11){\makebox(0,0)[t]{$1$}}
\put(20,-6){\makebox(0,0)[t]{$2$}}
\put(80,-6){\makebox(0,0)[t]{$l\!\! -\!\! 1$}}
\put(100,-6){\makebox(0,0)[t]{$l$}}
}
\end{picture}

\begin{picture}(400, 80)(-31,-8)
\put(230,20){
\put(35,24){$D^{(3)}_4$}
\multiput( 20,0)(20,0){3}{\circle{6}}
\put(23,0){\line(1,0){14}}
\put(43,0){\line(1,0){14}}
\multiput(42,-2,6)(0,5,2){2}{\line(1,0){17,0}}
\put(47.5,0){\line(1,-1){5}}
\put(47.5,0){\line(1,1){5}}
\put(20,-6){\makebox(0,0)[t]{$0$}}
\put(40,-6){\makebox(0,0)[t]{$1$}}
\put(60,-6){\makebox(0,0)[t]{$2$}}
}
\put(20,20){
\put(45,24){$E^{(2)}_{6}$}
\multiput(6,0)(20,0){5}{\circle{6}}
\multiput(9,0)(20,0){2}{\line(1,0){14}}
\put(69,0){\line(1,0){14}}
\multiput(48.85,-1)(0,2){2}{\line(1,0){14.3}}
\put(56,0){\makebox(0,0){$<$}}
\put(6,-6){\makebox(0,0)[t]{$0$}}
\put(26,-6){\makebox(0,0)[t]{$1$}}
\put(46,-6){\makebox(0,0)[t]{$2$}}
\put(66,-6){\makebox(0,0)[t]{$3$}}
\put(86,-6){\makebox(0,0)[t]{$4$}}
}
\end{picture}
\caption{Twisted affine Dynkin diagrams \label{tab:Dynkin2}}
\end{table}

Now, we assume $\geh$ to be one of type $A_{2l-1}$, $D_{l+1}$, $E_6$ or $D_4$ and
$\nu$ as in Table \ref{tab:Dynkin1}.
Following \cite{BS,CalLM,L,PS1,PS2}, the automorphism $\nu$ of $\heh$ is lifted to an automorphism $\nu$ of $\geh$ by
\[
\nu x_\alpha=\psi(\alpha)x_{\nu\alpha}
\]
where $\psi$ is some function from $\Delta$ to $\{\pm1\}$. 
For $j\in\Z$ set
\[\geh_{(j)}=\{x\in\geh\mid\nu(x)=\eta^jx\}.\]
The twisted affine Lie algebra $\hat{\geh}[\nu]$ associated to $\geh$ and $\nu$ is given by
\[\hat{\geh}[\nu]=\bigoplus_{m\in\frac1{r}\Z}\geh_{(rm)}\ot t^m\oplus\C c\]
with Lie bracket
\[[x\ot t^m,y\ot t^n]=[x,y]\ot t^{m+n}+\pair{x,y}m\delta_{m+n,0}c,\qquad
[c,\hat{\geh}[\nu]]=0\]
for $m,n\in\frac1{r}\Z$, $x\in\geh_{(rm)}$ and $y\in\geh_{(rn)}$.
We also define the Lie algebra $\tilde{\geh}[\nu]$ by
\[\tilde{\geh}[\nu]=\hat{\geh}[\nu]\oplus\C d\]
where $d$ is the degree operator such that
\[[d,x\ot t^n]=nx\ot t^n\]
for $n\in\frac1{r}\Z$, $x\in\geh_{(rn)}$ and $[d,c]=0$.
This Lie algebra $\hat{\geh}$ (or $\tilde{\geh}$) is isomorphic to a twisted
affine Lie algebra of type $A_{2l-1}^{(2)}$, $D_{l+1}^{(2)}$, $E_6^{(2)}$ or $D_4^{(3)}$ depending on the choice of $L$ and $\nu$. See Table \ref{tab:Dynkin2} for their 
Dynkin diagrams.
\begin{theorem}
$($\cite[Theorem 3.1]{CalLM},\cite[Theorem 3]{FLM},\cite[Theorem 9.1]{L}$)$
The representation of $\hat{\heh}[\nu]$ on $V_L^T$ extends uniquely to a Lie algebra representation of $\tilde{\geh}[\nu]$ on $V_L^T$ such that
\[(x_\alpha)_{(rm)}\ot t^m\mapsto Y^{\hat{\nu}}_\alpha(m)\]
for all $m\in\frac1{r}\Z$ and $\alpha\in L_2=\{\alpha\in L\mid\pair{\alpha,\alpha}=2\}$.
Moreover $V_L^T$ is irreducible as a $\tilde{\geh}[\nu]$-module.
\end{theorem}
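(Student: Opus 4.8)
The plan is to reduce everything to the known untwisted Frenkel--Kac--Segal construction by exploiting the fact that the twisted vertex operator $Y^{\hat\nu}(a,z)$ is essentially obtained from the untwisted one by the substitution $z^{1/r}\mapsto$ (roots of unity times $z^{1/r}$) together with the grading shift encoded in \eqref{tvo}. First I would verify that the map $(x_\alpha)_{(rm)}\ot t^m\mapsto Y^{\hat\nu}_\alpha(m)$ is well defined: since $\geh_{(rm)}$ is spanned by the $(x_\alpha)_{(rm)}=P_{rm}x_\alpha$ for $\alpha\in L_2$ (together with the Cartan part, already handled by $\hat\heh[\nu]$), and since $(x_{\nu\alpha})_{(rm)}=\eta^{?}(x_\alpha)_{(rm)}$ matches the component relation $Y^{\hat\nu}_{\nu\alpha_i}(m)=\eta^{rm}Y^{\hat\nu}_{\alpha_i}(m)$ noted after \eqref{1a}, one checks the assignment respects the defining linear relations among the $(x_\alpha)_{(rm)}$. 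In particular the normalizing constant $r^{-\pair{\bar a,\bar a}/2}\sigma(\bar a)$ and the section $e_\alpha$ must be calibrated so that $[x_\alpha,x_\beta]=\ep_{C_0}(\alpha,\beta)x_{\alpha+\beta}$ lifts correctly; this is where the precise choice of $\ep_C$, the relation \eqref{1}, and $\sigma$ enter.

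The heart of the argument is the commutator identity
\[
[Y^{\hat\nu}_\alpha(m),Y^{\hat\nu}_\beta(n)] = (\text{bracket in }\hat\geh[\nu]\text{ applied to the images}),
\]
which I would establish for $\alpha,\beta\in L_2$ by computing the operator product $Y^{\hat\nu}(e_\alpha,z_1)Y^{\hat\nu}(e_\beta,z_2)$. Using \eqref{comm rel of E} to move $E^+(-\alpha,z_1)$ past $E^-(-\beta,z_2)$ produces the scalar factor $\prod_{i=0}^{r-1}(1-\eta^i z_2^{1/r}/z_1^{1/r})^{\pair{\nu^i\alpha,\beta}}$, and commuting the lattice elements $e_\alpha,e_\beta$ produces $\ep_C(\alpha,\beta)/\ep_C(\beta,\alpha)=C(\alpha,\beta)=\prod_j(-\eta^j)^{\pair{\nu^j\alpha,\beta}}$ together with the $z^{\bar a_{(0)}}$ monodromy. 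The upshot, after taking the appropriate formal residues/contour differences in $z_1,z_2$ and using $\pair{\alpha,\alpha}=\pair{\beta,\beta}=2$ so that only $\pair{\alpha,\beta}\in\{0,\pm1,\pm2\}$ occur, is exactly the three cases in the bracket of $\hat\geh[\nu]$: a Heisenberg-type central term (with the $m\delta_{m+n,0}c$ and the form $\pair{x_\alpha,x_{-\alpha}}=\ep_{C_0}(\alpha,-\alpha)$) when $\alpha+\beta=0$, a structure-constant term giving $\ep_{C_0}(\alpha,\beta)Y^{\hat\nu}_{\alpha+\beta}$ when $\alpha+\beta\in\Delta$, and vanishing otherwise. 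The relation \eqref{[h,Y]} together with the $\hat\heh[\nu]$-module structure on $V_L^T$ takes care of the mixed brackets $[h(n),Y^{\hat\nu}_\alpha(m)]$, and the $d$-action defined via $d\cdot a=-\tfrac12\pair{\bar a,\bar a}a$ on $U_T$ together with the obvious grading on $S[\nu]$ gives $[d,Y^{\hat\nu}_\alpha(m)]=mY^{\hat\nu}_\alpha(m)$ from the explicit power of $z$ in \eqref{tvo}. This establishes that $V_L^T$ is a $\tilde\geh[\nu]$-module, and uniqueness is immediate since $\hat\heh[\nu]$ and the $Y^{\hat\nu}_\alpha(m)$ generate $\tilde\geh[\nu]$.

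For irreducibility I would argue as in \cite{FLM,CalLM}: $V_L^T=S[\nu]\ot U_T$ as a vector space, $S[\nu]$ is already irreducible over the twisted Heisenberg $\hat\heh[\nu]_{\frac1r\Z}$, and $U_T\simeq\C[P_0L]$ is irreducible over $\hat L_\nu$ acting through $e_\alpha$ by the standard lattice argument (the $\heh_{(0)}$-weight spaces $(U_T)_\alpha$ are one-dimensional and permuted transitively by the $e_\alpha$'s, $\alpha\in L$, since $P_0L$ is generated by $P_0\alpha_i$). Given a nonzero submodule, intersect it with a weight space for $d$ and $\heh_{(0)}$; apply the annihilation operators $\alpha(m)$, $m>0$, to descend to the vacuum line of $S[\nu]$, then use the $Y^{\hat\nu}_\alpha(m)$ — whose leading $U_T$-action is multiplication by $e_\alpha$ — to move across all of $\C[P_0L]$, and finally reapply creation operators $\alpha(-m)$ to recover everything. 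I expect the main obstacle to be the bookkeeping in the commutator computation: keeping track of the interplay between the two cocycles $\ep_{C_0}$ and $\ep_C$ (via \eqref{1}), the normalization constants $r^{-\pair{\bar a,\bar a}/2}$ and $\sigma(\alpha)$, and the fractional powers of $z$ with their $\eta$-monodromy, so that the structure constants come out to be exactly $\ep_{C_0}(\alpha,\beta)$ rather than some twist thereof — this is precisely the delicate point that \cite{L} and \cite{CalLM} handle, so I would lean on their normalization choices, which the preliminaries above have been set up to match.
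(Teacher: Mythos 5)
The paper itself offers no proof of this theorem---it is imported verbatim from \cite[Theorem 9.1]{L}, \cite[Theorem 3]{FLM} and \cite[Theorem 3.1]{CalLM}---and your sketch reconstructs essentially the argument of those references: well-definedness via the $\nu$-eigenspace projections and the cocycle/normalization calibration, the operator-product computation from \eqref{comm rel of E} together with \eqref{[h,Y]} and the $d$-grading, and the standard Heisenberg-plus-lattice argument for irreducibility (using that the $(U_T)_\alpha$, $\alpha\in P_0L$, are one-dimensional and permuted by the $e_\gamma$). One point to tighten: because $(x_\alpha)_{(rm)}$ is a projection onto a $\nu$-eigenspace, the commutator $[Y^{\hat{\nu}}_\alpha(m),Y^{\hat{\nu}}_\beta(n)]$ receives a contribution from every $j$ with $\nu^j\alpha+\beta\in\Delta\cup\{0\}$ (these are the poles at $z_1^{1/r}=\eta^jz_2^{1/r}$ of the product factor you invoke), so your trichotomy indexed by $\alpha+\beta$ alone is literally false as stated---for instance $[Y^{\hat{\nu}}_\alpha(m),Y^{\hat{\nu}}_{-\nu\alpha}(n)]$ has a central term even though $\alpha-\nu\alpha$ is neither $0$ nor a root---and only becomes correct after reducing via $Y^{\hat{\nu}}_{\nu\gamma}(m)=\eta^{rm}Y^{\hat{\nu}}_{\gamma}(m)$, exactly as the cited proofs do.
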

In fact, $V_L^T$ is an integrable highest weight module of highest weight 
$\La_0$, where $\Lambda_0$ is the fundamental weight such that $\pair{\La_0,c}=1$
and $\pair{\La_0,\heh_{(0)}}=0=\pair{\La_0,d}$. 
A highest weight vector $1\ot (e_0\ot1)\in V_L^T=S[\nu]\ot U_T$ is denoted by ${\bf 	1}_T$. We also have the following formulas on $V_L^T$.
\begin{align}
&e_\alpha de_\alpha^{-1}=d+\alpha-\frac12\pair{\alpha,\alpha}, \label{ed0}\\
&e_\alpha he_\alpha^{-1}=h-\alpha(h), \label{eh0}\\
&e_\alpha h(j)e_\alpha^{-1}=h(j)\text{ for }j\ne0, \label{ehe0}\\
&e_\alpha Y^{\hat{\nu}}_\beta(j)e_\alpha^{-1}=C(\alpha,\beta)Y^{\hat{\nu}}_\beta(j-\pair{\alpha,\beta_{(0)}}) \label{ex0}.
\end{align}

\subsection{Standard module}

Here we summarize the Cartan subalgebra and simple roots as the twisted 
affine Lie algebras. Set $l=4$ for $E_6^{(2)}$ and $l=2$ for $D_4^{(3)}$.
The Cartan subalgebra is identified with 
\[
\heh_{(0)}\oplus\C c\oplus\C d.
\]
Chevalley generators $h_i$ ($0\le i\le l$) are given by
$h_i=\alpha_i$ when $\nu\alpha_i=\alpha_i$, $h_i=\sum_{p=0}^{r-1}\nu^p\alpha_i$
otherwise for $i\ne0$ and $h_0=-\sum_{p=0}^{r-1}\nu^p\theta^0$ where
$\theta^0$ is given by
\[
\theta^0=\left\{
\begin{array}{ll}
\alpha_1+\cdots+\alpha_{2l-2}&\text{for }A_{2l-1}^{(2)}\\
\alpha_1+\cdots+\alpha_l&\text{for }D_{l+1}^{(2)}\\
\alpha_1+2\alpha_2+2\alpha_3+\alpha_4+\alpha_5+\alpha_6&\text{for }E_6^{(2)}\\
\alpha_1+\alpha_2+\alpha_3&\text{for }D_4^{(3)}.
\end{array}\right.
\]
See \cite[\S8.3]{Kac}. $\tilde{\geh}[\nu]$ contains a finite-dimensional simple 
Lie algebra $\tilde{\geh}[\nu]_{\bar{0}}$ (in Kac's notation) whose Dynkin diagram is 
obtained by removing the node $0$ from that of the twisted affine Lie algebra.
One can take the set of simple roots of $\tilde{\geh}[\nu]_{\bar{0}}$ as that of $\geh$
modulo the automorphism $\nu$, namely, $\{\alpha_1,\ldots,\alpha_l\}$. 
We denote this root lattice of $\tilde{\geh}[\nu]_{\bar{0}}$ by $Q$.

We consider the standard $\tilde{\geh}[\nu]$-module $L(k\La_0)$ of higher level $k$, namely, the integrable highest weight $\tilde{\geh}[\nu]$-module of highest weight $k\La_0$.
Since we know $L(\La_0)\simeq V_L^T$, 
we realize $L(k\La_0)$ as a submodule of the tensor product of $k$ copies of $V_L^T$ 
as \[L(k\La_0)\simeq U(\tilde{\geh}[\nu])\cdot v_0 \subset {(V_L^T)}^{\ot k},\]
where $v_0={\bf 1}_T\ot\cdots\ot{\bf 1}_T$ is a highest weight vector of $L(k\La_0)$. 
On $L(k\La_0)$, elements of $\tilde{\geh}[\nu]$ act through the coproduct
\[
\Delta^{(k-1)}(x)=x\ot1\ot\cdots\ot1+1\ot x\ot\cdots\ot1+\cdots+1\ot1\ot\cdots\ot x,
\]
where there are $k$ components in each term. It is also true for the twisted vertex 
operator $Y^{\hat{\nu}}(e_\alpha,z)$. For a simple root $\alpha_i$ and a positive
integer $n$, we set 
\begin{equation}\label{tvo2}
x^{\hat{\nu}}_{n\alpha_i}(z)=x^{\hat{\nu}}_{\alpha_i}(z)^n=[\Delta^{(k-1)}
(Y^{\hat{\nu}}(e_{\alpha_i},z))]^n.
\end{equation}
Note that $x^{\hat{\nu}}_{(k+1)\alpha_i}(z)=0$.
We also define a component operator $x^{\hat{\nu}}_{n\alpha_i}(m)$ by
\[
x^{\hat{\nu}}_{n\alpha_i}(z)=\sum_{m\in\frac1{r}\Z}x^{\hat{\nu}}_{n\alpha_i}(m)z^{-m-n}.
\]
Later, we will use the following commutation relation which can be shown using \eqref{[h,Y]}.  If $m\ne0$, 
\begin{equation} \label{[h,x]}
[h(m),x^{\hat{\nu}}_{n\alpha_i}(z)]=n\pair{h_{(rm)},\alpha_{(-rm)}}z^mx^{\hat{\nu}}_{n\alpha_i}(z),
\end{equation}
or equivalently,
\[
[h(m),x^{\hat{\nu}}_{n\alpha_i}(j)]=n\pair{h_{(rm)},\alpha_{(-rm)}}x^{\hat{\nu}}_{n\alpha_i}(j+m).
\]

For $\alpha\in L$, $E^\pm(\alpha,z)$ acts on $(V_L^T)^{\ot k}$ diagonally and hence also on $L(k\La_0)$. 
One checks 
\begin{align}
\label{Ehcom}
[E^\pm(\alpha,z),h(n)]=\left\{
\begin{array}{ll}
k\pair{\alpha_{(-rn)},h_{(rn)}}z^nE^\pm(\alpha,z)&\text{ if }\mp n>0,\\
0&\text{ othewise}.
\end{array}\right.
\end{align}
$e_\alpha$ also acts on $(V_L^T)^{\ot k}$ diagonally, namely, 
\[
e_\alpha\mapsto e_\alpha\ot\cdots\ot e_\alpha.
\]
This corresponds to the translation operator of the
affine Weyl group of $\tilde{\geh}[\nu]$. See Section 1.5 of \cite{BKP} for the 
untwisted case.
Based on the calculations \eqref{ed0}-\eqref{ex0} on $V_L^T$, we define the adjoint action on $\tilde{\geh}[\nu]$ of the multiplicative group
isomorphic to $Q$ by
\begin{align}
\label{ec}&e_\alpha ce_\alpha^{-1}=c,\\
\label{ed}&e_\alpha de_\alpha^{-1}=d+\alpha-\frac12\pair{\alpha,\alpha}c,\\
\label{eh}&e_\alpha he_\alpha^{-1}=h-\alpha(h)c\text{ for }h\in\heh_{(0)},\\
\label{ehe}&e_\alpha h(j)e_\alpha^{-1}=h(j)\text{ for }j\ne0,\\
\label{ex}&e_\alpha x^{\hat{\nu}}_\beta(j)e_\alpha^{-1}=C(\alpha,\beta)x^{\hat{\nu}}_\beta(j-\pair{\alpha,\beta_{(0)}}).
\end{align}
Note that $c=k$ on $L(k\La_0)$.

Next we state the vertex operator formula that will be used later. This is a 
twisted version of (1.27) in \cite{BKP}. The proof is completely parallel to 
\cite[Theorem 5.6]{LP} or \cite[Theorem 6.4]{P}. 

\begin{lemma}
For a simple root $\alpha_i$, renormalize the twisted vertex operator $x^{\hat{\nu}}_{\alpha_i}(z)$ as $\tilde{x}^{\hat{\nu}}_{\alpha_i}(z)
=r\sigma(\alpha_i)^{-1}x^{\hat{\nu}}_{\alpha_i}(z)$. Then, for $p,q\geq0$ such that $p+q=k$, we have
\begin{align}\label{voformula}
\frac{1}{p!}E^-(\alpha_i,z)(z\tilde{x}_{\alpha_i}^{\hat{\nu}}(z))^pE^+(\alpha_i,z)=\frac{1}{q!}\ep_C(\alpha_i,-\alpha_i)^{-q}(z\tilde{x}_{-\alpha_i}^{\hat{\nu}}(z))^qe_{\alpha_i}z^{(\alpha_i)_{(0)}+k\rho_i}
\end{align}
as an operator on $(V_L^T)^{\ot k}$ or $L(k\La_0)$.
Furthermore, (\ref{voformula}) can be rewritten as 
\begin{equation}\label{voformula2}
E^-(\alpha_i,z){\rm exp}(z\tilde{x}_{\alpha_i}^{\hat{\nu}}(z))E^+(\alpha_i,z)
={\rm exp}(\ep_C(\alpha_i,-\alpha_i)^{-1}z\tilde{x}_{-\alpha_i}^{\hat{\nu}}(z))e_{\alpha_i}z^{(\alpha_i)_{(0)}+k\rho_i}.
\end{equation}
\end{lemma}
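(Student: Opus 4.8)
The plan is to mimic the classical proof of the vertex-operator formula (as in \cite[Theorem 5.6]{LP} or \cite[Theorem 6.4]{P}), adapting each ingredient to the twisted setting. First I would establish the level-$1$ case $k=1$ (so $p=q$ exchanged between $0$ and $1$), working directly on $V_L^T$. The key computation is to move the ``positive'' half $E^+(\alpha_i,z)$ of the Heisenberg exponentials past the vertex operator $\tilde x^{\hat\nu}_{\alpha_i}(z)=r\sigma(\alpha_i)^{-1}Y^{\hat\nu}(e_{\alpha_i},z)$ and, on the other side, pull $E^-(\alpha_i,z)$ through from the left; since $Y^{\hat\nu}(e_{\alpha_i},z)$ already contains factors $E^-(-\alpha_i,z)E^+(-\alpha_i,z)$, the extra copies of $E^\pm(\alpha_i,z)$ exactly cancel these (by definition $E^+(\alpha,z)E^+(\beta,w)$ and $E^-(\alpha,z)E^-(\beta,w)$ commute, and $E^+(\alpha_i,z)E^-(-\alpha_i,z)$ has the simplest possible contraction at coincident points), leaving a bare $z$-power, the operator $e_{\alpha_i}$, and the remaining lattice grading operator $z^{(\alpha_i)_{(0)}}$. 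Tracking the scalar prefactors — the $r^{-\pair{\alpha_i,\alpha_i}/2}=r^{-1}$ and $\sigma(\bar a)$ in \eqref{tvo}, the renormalization constant $r\sigma(\alpha_i)^{-1}$, and the $\ep_C$-cocycle value $\ep_C(\alpha_i,-\alpha_i)$ coming from $e_{\alpha_i}e_{-\alpha_i}$ — gives precisely the right-hand side of \eqref{voformula}; the exponent $(\alpha_i)_{(0)}+\rho_i$ (with $\rho_i=\tfrac12\pair{(\alpha_i)_{(0)},(\alpha_i)_{(0)}}$ as in \eqref{rho}) is read off from the $z$-grading in \eqref{tvo} together with the action of $d$.

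Second, I would pass to level $k$ by applying the coproduct $\Delta^{(k-1)}$. Because $E^\pm(\alpha_i,z)$ act diagonally on $(V_L^T)^{\ot k}$, and $x^{\hat\nu}_{n\alpha_i}(z)=x^{\hat\nu}_{\alpha_i}(z)^n$ with $x^{\hat\nu}_{\alpha_i}(z)=\Delta^{(k-1)}(Y^{\hat\nu}(e_{\alpha_i},z))$, the level-$1$ identity on each tensor factor can be multiplied together. Concretely, on the $j$-th factor I have an identity of the shape $\tfrac1{1!}E^-(z)(z\tilde x^{(j)})^1E^+(z)=\tfrac1{0!}\ep_C^{0}(z\tilde x^{(j)}_{-})^0 e^{(j)}_{\alpha_i}z^{(\cdot)}$ \emph{and} its ``$p=0$'' counterpart $E^-(z)E^+(z)=\tfrac1{1!}\ep_C^{-1}(z\tilde x^{(j)}_{-})^1e^{(j)}_{\alpha_i}z^{(\cdot)}$; combining these over the $k$ factors and collecting terms by the total number $p$ of factors in which one picks the ``$\tilde x_{\alpha_i}$'' branch versus $q=k-p$ factors of the ``$\tilde x_{-\alpha_i}$'' branch yields \eqref{voformula}, with the $1/p!$ and $1/q!$ arising as the usual multinomial normalization (since $(\sum_j \tilde x^{(j)})^p = p!\sum \tilde x^{(j_1)}\cdots$ over distinct indices, the factors in distinct tensor slots commuting). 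The diagonal lattice action $e_{\alpha_i}\mapsto e_{\alpha_i}\ot\cdots\ot e_{\alpha_i}$ and the grading $z^{(\alpha_i)_{(0)}+k\rho_i}$ (the $k\rho_i$ because each of $k$ factors contributes $\rho_i$) match the stated right-hand side.

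Third, the generating-function form \eqref{voformula2} follows by summing \eqref{voformula} over all $p$ with $0\le p\le k$: the left side assembles into $E^-(\alpha_i,z)\,\mathrm{exp}(z\tilde x^{\hat\nu}_{\alpha_i}(z))\,E^+(\alpha_i,z)$ because $(z\tilde x^{\hat\nu}_{\alpha_i}(z))^{k+1}=0$ by the remark $x^{\hat\nu}_{(k+1)\alpha_i}(z)=0$, so the exponential series truncates exactly at degree $k$; the right side similarly assembles into $\mathrm{exp}(\ep_C(\alpha_i,-\alpha_i)^{-1}z\tilde x^{\hat\nu}_{-\alpha_i}(z))e_{\alpha_i}z^{(\alpha_i)_{(0)}+k\rho_i}$ using $\ep_C(\alpha_i,-\alpha_i)^{-q}$ as the $q$-th coefficient. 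One should also verify the operators are well defined on $L(k\La_0)$, not merely on $(V_L^T)^{\ot k}$: this is immediate since $L(k\La_0)=U(\tilde\geh[\nu])\cdot v_0$ is a submodule and all operators in sight ($E^\pm$, $x^{\hat\nu}_{\pm\alpha_i}$, $e_{\alpha_i}$, $z^{(\alpha_i)_{(0)}}$) preserve it.

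I expect the main obstacle to be bookkeeping rather than conceptual: precisely accounting for all scalar prefactors ($r$-powers, $\sigma(\alpha_i)$, the $\eta$-dependent factors lurking in $\sigma$ and in \eqref{nu hat 2}, and the $\ep_C$ versus $\ep_{C_0}$ normalization via \eqref{1}) and confirming the exponent of $z$ is exactly $(\alpha_i)_{(0)}+k\rho_i$. The subtle point peculiar to the twisted case is that the coincident-point contraction from \eqref{comm rel of E} involves the product $\prod_{i=0}^{r-1}(1-\eta^i z^{1/r}/z^{1/r})^{\pair{\nu^i\alpha_i,\alpha_i}}$, whose $i=0$ factor naively vanishes; this is exactly why the renormalization by $r\sigma(\alpha_i)^{-1}$ and the $r^{-\pair{\bar a,\bar a}/2}$ in \eqref{tvo} are present, and one must check these conspire to produce a finite nonzero constant — I would handle this by regularizing (take $z_1,z_2$ distinct, cancel against the matching factor in $Y^{\hat\nu}$, then set $z_1=z_2$), following the argument in \cite{CalLM}. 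Once the level-$1$ constants are pinned down, the passage to level $k$ and to the exponential form is formal.
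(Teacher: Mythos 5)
Your proposal is correct and follows essentially the same route as the paper: the level-$1$ identity read off directly from \eqref{tvo} (after the renormalization by $r\sigma(\alpha_i)^{-1}$), the passage to level $k$ via the grouplike action of $E^\pm(\alpha_i,z)$, $e_{\alpha_i}$, $z^{(\alpha_i)_{(0)}+\rho_i}$ on $(V_L^T)^{\ot k}$ with the $p!$, $q!$ multinomial bookkeeping (which rests on the square-zero property of the level-$1$ vertex operator), and the exponential form \eqref{voformula2} from the truncation $\tilde{x}^{\hat\nu}_{\alpha_i}(z)^{k+1}=0$. The one point worth noting is that the coincident-point contraction you worry about never actually arises: the factors $E^\mp(-\alpha_i,z)$ inside $Y^{\hat{\nu}}(e_{\alpha_i},z)$ cancel the outer $E^\mp(\alpha_i,z)$ exactly (they commute with $e_{\alpha_i}$ and the $z$-power by \eqref{ehe0}), so no regularization via \eqref{comm rel of E} is needed.
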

\begin{proof}
Set $y_{\alpha}(z)=r^{\frac{\pair{\alpha,\alpha}}2}\sigma(\alpha)^{-1}Y^{\hat{\nu}}(e_{\alpha_i},z)$. From (\ref{tvo}), we have
\[E^-(\alpha_i,z)y_{\alpha_i}(z)E^+(\alpha_i,z)=e_{\alpha_i}z^{(\alpha_i)_{(0)}+\rho_i-1}\]
on $V_L^T$. Since $y_{\alpha_i}(z)^2=0$ on $V_L^T$, 
$\tilde{x}_{\alpha_i}^{\hat{\nu}}(z)^p$ acts on $(V_L^T)^{\ot k}$ as 
$p!\sum y_{\alpha_i}(z)^{j_1}\ot\cdots\ot y_{\alpha_i}(z)^{j_k}$, where $j_m\in\{0,1\}$ and $j_1+\cdots+j_k=p$. Since $E^\pm(\alpha_i,z),
e_{\alpha_i},z^{(\alpha_i)_{(0)}+\rho_i}$ all act grouplike on $(V_L^T)^{\ot k}$, 
\begin{align*}
\text{LHS of \eqref{voformula}}
&=z^pE^-(\alpha_i,z)(\sum y_{\alpha_i}(z)^{j_1}\ot\cdots\ot y_{\alpha_i}(z)^{j_k})E^+(\alpha_i,z)\\
&=\ep_C(\alpha_i,-\alpha_i)^{-q}z^q(\sum y_{-\alpha_i}(z)^{1-j_1}e_{\alpha_i}z^{(\alpha_i)_{(0)}+\rho_i}\ot\cdots\ot y_{-\alpha_i}(z)^{1-j_k}e_{\alpha_i}z^{(\alpha_i)_{(0)}+\rho_i})\\
&=\frac1{q!}\ep_C(\alpha_i,-\alpha_i)^{-q}(z\tilde{x}_{-\alpha_i}^{\hat{\nu}}(z))^qe_{\alpha_i}z^{(\alpha_i)_{(0)}+k\rho_i}.
\end{align*}
Here we have used $z^{(\alpha_i)_{(0)}-\rho_i}e_{-\alpha_i}^{-1}=\ep_C(\alpha_i,-\alpha_i)^{-1}e_{\alpha_i}
z^{(\alpha_i)_{(0)}+\rho_i}$.

Noting $\tilde{x}^{\hat{\nu}}_{\alpha_i}(z)^{k+1}=0$ on $(V_L^T)^{\ot k}$, we obtain
(\ref{voformula2}) from (\ref{voformula}) immediately.
\end{proof}

\subsection{Principal subspace}

We will introduce the notion of the principal subspace of $L(k\La_0)$ and twisted quasi-particle bases which we will use to construct parafermionic bases.
First, denote by $\Delta_+$ the set of positive roots and by
\[\neh=\bigoplus_{\alpha\in\Delta_+}\C x_\alpha\]
the Lie subalgebra of $\geh$ which is the nilradical of a Borel subalgebra. Consider its twisted affinization
\[\hat{\neh}[\nu]=\bigoplus_{m\in\frac1{r}\Z}\neh_{(rm)}\ot t^m\oplus\C c\]
and its subalgebra
\[
\bar{\neh}[\nu]=\bigoplus_{m\in\frac1{r}\Z}\neh_{(rm)}\ot t^m.
\]
In \cite{BS,FS1,FS2,PS1,PS2}, the principal subspace $W(k\La_0)$ of $L(k\La_0)$ is defined as
\[W(k\La_0)=U(\bar{\neh}[\nu])\cdot v_0.\]

From \cite{BS}, we review twisted quasi-particle monomials.
We define the twisted quasi-particle of color $i$, charge $n$ and energy $-m$ for each simple root $\alpha_i$, $n\in{\mathbb N}$, and $m\in\frac1{r}\Z$ as the coefficient $x^{\hat{\nu}}_{n\alpha_i}(m)$ in (\ref{tvo2}).
A twisted quasi-particle monomial is then defined by
\begin{equation} \label{b}
b=x^{\hat{\nu}}_{n_{r_l^{(1)},l}\alpha_l}(m_{r_l^{(1)},l})\cdots x^{\hat{\nu}}_{n_{1,l}\alpha_l}(m_{1,l})\cdots x^{\hat{\nu}}_{n_{r_1^{(1)},1}\alpha_1}(m_{r_1^{(1)},1})\cdots x^{\hat{\nu}}_{n_{1,1}\alpha_1}(m_{1,1}).
\end{equation}
The sequence 
\[{\mathcal R}^{\prime}=\left(n_{r_l^{(1)},l},\ldots,n_{1,l};\ldots;n_{r_1^{(1)},1}\ldots n_{1,1}\right)\]
is called its $charge$-$type$. We assume $1\le n_{r_i^{(1)},i}\leq\ldots\leq n_{1,i}\le k$ for each $i$.
The $dual$-$charge$-$type$
\[{\mathcal R}=\left(r_l^{(1)},\ldots, r_l^{(k)};\ldots;r_1^{(1)},\ldots,r_1^{(k)}\right)\]
is defined in the way that $(r_i^{(1)},\ldots,r_i^{(k)})$ is the transposed partition of $(n_{1,i},\ldots,n_{r_i^{(1)},i})$ for each $i$.
Namely, $r_i^{(s)}$ stands for the number of quasi-particles of color $i$ and charge greater than or equal to $s$ in the monomial $b$. So 
we have $r_i^{(1)}\geq r_i^{(2)}\geq\cdots\geq r_i^{(k)}\geq0$. 
The $color$-$type$ is defined by
\[\mathcal{C}=(r_l,\ldots,r_1)\]
where
\begin{equation} \label{col type}
r_i=\sum_{p=1}^{r_i^{(1)}}n_{p,i}=\sum_{s=1}^k r_i^{(s)}.
\end{equation}

According to \cite{BS}, we consider the following conditions ($C1$)-($C3$) for the mode $m$ in 
$x^{\hat{\nu}}_{n\alpha_i}(m)$ in $b$.
\begin{align*}
(C1)\quad&m_{p,i}\in\rho_i\Z\quad\text{for }1\leq p\leq r_i^{(1)},1\leq i\leq l,\\
(C2)\quad&m_{p,i}\leq-(2p-1)\rho_in_{p,i}-\pair{(\alpha_i)_{(0)},(\alpha_{i-1})_{(0)}}\sum_{q=1}^{r_{i-1}^{(1)}}{\rm min}\{n_{p,i},n_{q,i-1}\}
\quad\text{for }1\leq p\leq r_i^{(1)},~1\leq i\leq l,\\
(C3)\quad&m_{p+1,i}\leq m_{p,i}-2\rho_in_{p,i}~{\rm if}~n_{p+1,i}=n_{p,i}\quad\text{for }1\leq p\leq r_i^{(1)}-1,~1\leq i\leq l.
\end{align*}
Here we understand $r_0^{(1)}=0$.
Set
\[
B_W=\bigcup_{\substack{r_1^{(1)}\geq\cdots\geq r_1^{(k)}\geq0\\
\vdots\\
r_l^{(1)}\geq\cdots\geq r_l^{(k)}\geq0}}
\{b\text{ as in \eqref{b}}\mid b\text{ satisfies }(C1),(C2)\text{ and }(C3)\}
\]
We know that the principal subspace $W(k\La_0)$ has a basis consisting from 
twisted quasi-particle monomials.
\begin{theorem} \label{th:BS}
$($\cite[Theorem 5.1]{BS}$)$ 
The set $\mathcal{B}_W=\{bv_0\mid b\in B_W\}$
is a basis of the principal subspace $W(k\La_0)$.
\end{theorem}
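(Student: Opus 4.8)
This statement is \cite[Theorem 5.1]{BS}; the plan I would follow to prove it is the twisted version of the argument of Georgiev \cite{G1} (and of Butorac and collaborators in the untwisted setting), and it splits into spanning and linear independence.

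\textbf{Spanning.} Since $W(k\La_0)=U(\bar{\neh}[\nu])\cdot v_0$ and $\bar{\neh}[\nu]$ is generated over $\C$ by the charge-one quasi-particles $x^{\hat{\nu}}_{\alpha_i}(m)$ ($1\le i\le l$, $m\in\tfrac1r\Z$), I would first show that an arbitrary monomial in these operators can be reorganized, using the commutation relations arising from \eqref{comm rel of E} and \eqref{[h,x]}, into a $\C$-linear combination of monomials $b$ of the shape \eqref{b}: colors grouped, and within each color the charges weakly decreasing. The identity $x^{\hat{\nu}}_{(k+1)\alpha_i}(z)=0$ forces every charge to be $\le k$. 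Next one imposes the mode conditions. Condition $(C1)$ reflects that $Y^{\hat{\nu}}_{\alpha_i}(m)=0$ unless $m\in\rho_i\Z$ when $\nu\alpha_i=\alpha_i$ (the remark after \eqref{1a}) together with the collision relations, while $(C2)$ and $(C3)$ are produced, respectively, by the relations among products of same-color quasi-particles of unequal and of equal charge, together with the cross-color relations between adjacent colors $i,i-1$ — the latter generating exactly the $\sum_q\min\{n_{p,i},n_{q,i-1}\}$ term in the bound of $(C2)$. Whenever a generated monomial violates $(C3)$ one rewrites its lowest offending equal-charge pair via the quadratic relation, picking up monomials of strictly smaller charge-type in a fixed total order, and inducts; monomials sitting above the bound in $(C2)$ annihilate $v_0$. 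This exhibits $\mathcal{B}_W$ as a spanning set.

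\textbf{Linear independence.} Here I would induct on the charge-type, and at fixed charge-type on the total energy, using two tools. First, the translation operators $e_{\alpha_i}$ acting diagonally on $(V_L^T)^{\ot k}\supset L(k\La_0)$ as in \eqref{ec}--\eqref{ex}: by \eqref{ex} they shift the energy of each color-$i$ quasi-particle by $-\pair{\alpha_i,\beta_{(0)}}$, so a suitable product of them lowers the color-type of a monomial built from minimal-energy top-color quasi-particles, landing in a module covered by the inductive hypothesis. Second, the vertex-operator formula \eqref{voformula} of the Lemma, which on $(V_L^T)^{\ot k}$ exchanges a charge-$p$ factor $x^{\hat{\nu}}_{\alpha_i}(z)^p$ dressed by $E^{\pm}(\alpha_i,z)$ for a charge-$q$ factor ($p+q=k$) times $e_{\alpha_i}z^{(\alpha_i)_{(0)}+k\rho_i}$; iterating it together with coefficient extraction lets one read off the dual-charge-type $\mathcal{R}$ of the leading term of $b\,v_0$. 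Concretely, from a hypothetical relation among the $b\,v_0$ one projects onto a fixed $\mathcal{R}$ using the $\heh_{(0)}$-weight and the energy gradings, applies the appropriate $e_{\alpha_i}$'s and $E^{\pm}(\alpha_i,z)$'s to isolate the lexicographically largest surviving monomial, and evaluates against the known basis of the level-one module $V_L^T$, where independence is classical, forcing that coefficient to vanish; then one peels it off and repeats. The point of $(C1)$--$(C3)$ is that each reduction stays admissible for the smaller data, so the induction closes.

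\textbf{Main obstacle.} The delicate point is the cross-color bookkeeping in the independence step: one must verify that after the $e_{\alpha_i}$/vertex-operator reduction the image monomial still satisfies the analogues of $(C2)$--$(C3)$ for the smaller Dynkin data — which is precisely what the $\min\{n_{p,i},n_{q,i-1}\}$ bound guarantees — and that the energy count is sharp rather than merely an inequality. This forces a type-by-type analysis, since the values $\rho_i=\tfrac12\pair{(\alpha_i)_{(0)},(\alpha_i)_{(0)}}$, the lattice $N$ and the projections $P_j$ differ among $A_{2l-1}^{(2)}$, $D_{l+1}^{(2)}$, $E_6^{(2)}$ and $D_4^{(3)}$; carrying this out is exactly the content of \cite{BS}.
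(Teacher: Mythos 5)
First, note that this paper does not prove Theorem \ref{th:BS} at all: it is imported verbatim from \cite[Theorem 5.1]{BS}, so there is no ``paper's own proof'' to match your argument against; the only internal check available is whether your sketch is consistent with the machinery the paper later uses (its Lemma on \eqref{voformula}, the Georgiev-type projections, and the operators $A_{\lambda_i}$ in the proof of Theorem \ref{th:main1}), which closely mirrors the actual argument of \cite{BS}. Your spanning step is in the right spirit: relations from \eqref{comm rel of E} among same-color and adjacent-color quasi-particles, the vanishing $x^{\hat{\nu}}_{(k+1)\alpha_i}(z)=0$, and the integrality constraint behind $(C1)$ are indeed the ingredients that cut an arbitrary monomial down to the set $B_W$ of \eqref{b}, arguing by a suitable order on charge-types and energies.

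The genuine gap is in your linear-independence step. You propose to ``project onto a fixed $\mathcal{R}$ using the $\heh_{(0)}$-weight and the energy gradings'' and then ``evaluate against the known basis of the level-one module $V_L^T$.'' Weight and conformal energy do not separate monomials of different charge-types: distinct dual-charge-types routinely produce vectors of the same $\heh_{(0)}$-weight and the same degree, so no projection built from these gradings alone can isolate the leading charge-type. What actually does this work, both in \cite{BS} and in this paper's own proof of Theorem \ref{th:main1}, is the Georgiev-type projection $\pi_{\mathcal{R}_{\alpha_1}}$ onto a tensor product $L(\La_0)_{r_1^{(1)}}\ot\cdots\ot L(\La_0)_{r_1^{(k)}}$ of charge-graded components of level-one modules, determined by the dual-charge-type itself, combined with coefficients $A_{\lambda_i}(m)$ of twisted intertwining (simple-current type) operators attached to fundamental weights, which commute with the quasi-particle currents up to sign (cf.\ \eqref{Ax}, \eqref{[h,A]}, \eqref{A1}) and allow one to strip quasi-particles off one at a time, together with the translation operators $e_{\alpha_i}$ acting via \eqref{ec}--\eqref{ex}. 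Without introducing these two tools (or an equivalent substitute), the inductive ``peel off the largest monomial'' loop you describe cannot be closed; the rest of your outline (the role of \eqref{voformula}, the $e_{\alpha_i}$ shifts, the type-by-type dependence on $\rho_i$ and $N$) is accurate but rests on this unsupplied mechanism.
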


\section{Quasi-particle bases of standard modules}

\subsection{Spanning sets for standard modules}

\begin{lemma} \label{spanning set}
{\phantom{a}}
\begin{enumerate}
\item $L(k\La_0)=U(\hat{\heh}[\nu]^-)QW(k\La_0)$
\item $L(k\La_0)=QW(k\La_0)$
\end{enumerate}
\end{lemma}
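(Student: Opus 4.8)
The plan is to derive both statements from the PBW theorem together with the structure of $L(k\Lambda_0)$ as a cyclic $U(\tilde{\geh}[\nu])$-module generated by $v_0$. Decompose $\tilde{\geh}[\nu]=\hat{\neh}[\nu]^-\oplus\hat{\heh}[\nu]\oplus\hat{\neh}[\nu]^+\oplus(\text{Cartan part})$ in the triangular fashion dictated by the chosen Borel, where $\hat{\neh}[\nu]^{\pm}$ are the positive/negative "nilpotent" pieces; here $\bar{\neh}[\nu]$ from the excerpt is the relevant lower-triangular piece whose universal enveloping algebra produces $W(k\Lambda_0)$ out of $v_0$. By PBW, $L(k\Lambda_0)=U(\tilde{\geh}[\nu])v_0 = U(\hat{\neh}[\nu]^-)\,U(\hat{\heh}[\nu])\,U(\hat{\neh}[\nu]^+)\,U(\text{Cartan})\,v_0$. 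Since $v_0$ is a highest weight vector, the raising operators in $\hat{\neh}[\nu]^+$ and the positive-mode Heisenberg operators annihilate $v_0$, and the Cartan/degree operators act by scalars, so this collapses to $L(k\Lambda_0)=U(\hat{\neh}[\nu]^-)\,U(\hat{\heh}[\nu]^-)\,v_0$ — but we must be slightly careful, since $\bar\neh[\nu]$ mixes $t^m$ for all $m\in\frac1r\Z$, not just $m<0$; the standard argument is that $U(\hat{\neh}[\nu])v_0$ together with the action of the translation group $Q$ already captures enough, which is exactly what part (2) asserts.

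For part (1): the key step is to commute the negative-mode Heisenberg generators $h(m)$, $m<0$, past the quasi-particle operators. Using the commutation relation \eqref{[h,x]}, namely $[h(m),x^{\hat{\nu}}_{n\alpha_i}(z)]=n\pair{h_{(rm)},\alpha_{(-rm)}}z^mx^{\hat{\nu}}_{n\alpha_i}(z)$, one sees that $U(\hat{\heh}[\nu]^-)$ and $U(\bar{\neh}[\nu])$ together generate, from $v_0$, all of $U(\bar{\neh}[\nu])$ times $U(\hat{\heh}[\nu]^-)$ acting on $v_0$; more precisely $U(\hat{\heh}[\nu]^-)W(k\Lambda_0)=U(\hat{\heh}[\nu]^-)U(\bar\neh[\nu])v_0=U(\hat\neh[\nu]^-\oplus\hat\heh[\nu]^-)v_0$. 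To see this equals $L(k\Lambda_0)$, run the PBW/highest-weight collapse above but now observe that $\bar\neh[\nu]=\hat\neh[\nu]^-\oplus(\text{zero-mode and positive-mode nilpotent part})$, and the positive-mode part of $\bar\neh[\nu]$ applied to $v_0$ stays inside $U(\hat\heh[\nu]^-)\cdot(\text{stuff already obtained})$ because raising the mode eventually hits a vector killed by positivity of energy — i.e. one argues by a standard downward induction on the $d$-eigenvalue (conformal weight), using that $L(k\Lambda_0)$ is $\frac1r\Z_{\le 0}$-graded with finite-dimensional graded pieces, to show every graded piece lies in $U(\hat{\heh}[\nu]^-)QW(k\Lambda_0)$. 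The role of $Q$ (the translation operators $e_\alpha$) is to move between the finitely many "top" components of the integrable module; concretely, \eqref{ex} shows $e_\alpha$ conjugates quasi-particles to quasi-particles with shifted mode, and \eqref{ed} shifts the degree, so applying suitable $e_\alpha$ to $W(k\Lambda_0)$ reaches weight spaces that $\bar\neh[\nu]$ alone cannot.

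For part (2): here the extra input is that the Heisenberg factor $U(\hat{\heh}[\nu]^-)$ is already absorbed. The cleanest route is the vertex-operator relation from the Lemma just proved, \eqref{voformula}/\eqref{voformula2}: the identity
\[
E^-(\alpha_i,z)\,\mathrm{exp}(z\tilde{x}_{\alpha_i}^{\hat{\nu}}(z))\,E^+(\alpha_i,z)
=\mathrm{exp}(\ep_C(\alpha_i,-\alpha_i)^{-1}z\tilde{x}_{-\alpha_i}^{\hat{\nu}}(z))\,e_{\alpha_i}z^{(\alpha_i)_{(0)}+k\rho_i}
\]
expresses the "bad" direction (lowering roots $-\alpha_i$, needed to leave $W$) in terms of quasi-particles in the good direction together with the Heisenberg exponentials $E^\pm$ and the translation $e_{\alpha_i}$. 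Reading it the other way, $E^-(\alpha_i,z)$ — whose expansion produces arbitrary words in $h(m)$ with $m<0$ — is conjugated into a product of quasi-particle operators $x^{\hat{\nu}}_{-\alpha_i}(m)$, times $e_{\alpha_i}^{-1}$, times positive-mode operators that kill highest-weight-type vectors. Applying this iteratively over the simple roots, one shows $U(\hat{\heh}[\nu]^-)v_0\subseteq \sum_{\alpha\in Q}e_\alpha W(k\Lambda_0)=QW(k\Lambda_0)$, and more generally $U(\hat{\heh}[\nu]^-)W(k\Lambda_0)\subseteq QW(k\Lambda_0)$, since the quasi-particle operators appearing on the right-hand side can be re-collected into $U(\bar\neh[\nu])$-monomials after passing the $e_\alpha$'s through via \eqref{ex}. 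Combined with part (1), this yields $L(k\Lambda_0)=U(\hat{\heh}[\nu]^-)QW(k\Lambda_0)=QW(k\Lambda_0)$.

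The main obstacle I expect is part (2): making rigorous the step "$E^-(\alpha_i,z)$ applied to $v_0$ lands in $Q\cdot W(k\Lambda_0)$" requires carefully tracking that the formal-variable manipulations in \eqref{voformula2} — which a priori are identities of operator-valued formal series — produce genuine finite linear combinations when applied to any fixed vector of the graded module, and that the extra factors $z^{(\alpha_i)_{(0)}+k\rho_i}$ and the positive-mode parts of the exponentials act in a controlled (eventually trivial) way on the relevant weight spaces. This is the same technical heart as in \cite[\S5--6]{LP} and in the untwisted treatment of \cite{BKP}; one manages it by truncating to a fixed $d$-eigenspace, where all series are polynomial, and inducting on the weight lattice $Q$ modulo its intersection with the "principal cone", using integrability to guarantee the induction terminates.
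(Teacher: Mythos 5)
There is a genuine gap, and it is in part (1): you never account for the negative root vectors. The module $L(k\La_0)=U(\hat{\geh}[\nu])v_0$ is spanned by monomials in $x^{\hat{\nu}}_{\pm\alpha_i}(m)v_0$, and the whole difficulty of (1) is to get rid of the operators $x^{\hat{\nu}}_{-\alpha_i}(m)$, which do not lie in $\bar{\neh}[\nu]\oplus\hat{\heh}[\nu]$. Your PBW ``collapse'' to $U(\hat{\neh}[\nu]^-\oplus\hat{\heh}[\nu]^-)v_0$, and the asserted identity $U(\hat{\heh}[\nu]^-)W(k\La_0)=U(\hat{\neh}[\nu]^-\oplus\hat{\heh}[\nu]^-)v_0=L(k\La_0)$, are false: every $\heh_{(0)}$-weight occurring in $U(\bar{\neh}[\nu]\oplus\hat{\heh}[\nu])v_0$ lies in $k\La_0|_{\heh_{(0)}}+\sum_i\Z_{\ge0}\alpha_i$, whereas $L(k\La_0)$ contains, e.g., $x^{\hat{\nu}}_{-\alpha_i}(0)v_0$ of lower $\heh_{(0)}$-weight; no amount of ``downward induction on the $d$-eigenvalue'' or vague appeal to $e_\alpha$ ``reaching further weight spaces'' supplies these vectors, and your fallback (``this is exactly what part (2) asserts'') is circular, since your proof of (2) invokes (1). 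The missing idea is precisely the vertex operator formula \eqref{voformula}: taking $q\ge1$ it expresses $x^{\hat{\nu}}_{-\alpha_i}(m)$ through $x^{\hat{\nu}}_{\alpha_i}(m')$, the lattice operator $e_{\alpha_i}$ and elements of $U(\hat{\heh}[\nu])$; one then moves Heisenberg negative modes left and positive modes right via \eqref{[h,x]}, \eqref{ehe} (positive modes kill $v_0$) and pulls the $e_\alpha$'s to the left via \eqref{ex}. That is how $Q$ genuinely enters, not as translations between ``top components.''

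Your part (2) also does not close as written. Reading \eqref{voformula2} ``the other way'' rewrites $E^-(\alpha_i,z)$ in terms of $\exp$ of $x^{\hat{\nu}}_{-\alpha_i}(z)$ times $e_{\alpha_i}$ times positive-mode factors; but $x^{\hat{\nu}}_{-\alpha_i}(m)\notin\bar{\neh}[\nu]$, so ``re-collecting into $U(\bar\neh[\nu])$-monomials'' is not available and the conclusion $U(\hat{\heh}[\nu]^-)W(k\La_0)\subseteq QW(k\La_0)$ does not follow from the step you describe. The paper instead uses the $q=0$ (equivalently $p=k$) case, which gives
\begin{equation*}
E^-(-\alpha_i,z)=\tfrac{1}{k!}\,(z\tilde{x}^{\hat{\nu}}_{\alpha_i}(z))^k\,E^+(\alpha_i,z)\,z^{-(\alpha_i)_{(0)}-k\rho_i}\,e_{\alpha_i}^{-1},
\end{equation*}
i.e.\ an expression involving only positive-root quasi-particles, positive Heisenberg modes and a lattice element; since the coefficients of the $E^-(-\alpha_i,z)$ span $U(\hat{\heh}[\nu]^-)$, applying this together with \eqref{Ehcom} to vectors of $QW(k\La_0)$ yields (2) from (1). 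If you reorganize part (1) around \eqref{voformula} as above and replace your ``other way'' reading in part (2) by the $q=0$ specialization, the argument matches the paper's; as it stands, both halves rest on steps that fail.
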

\begin{proof}
$\hat{\geh}[\nu]$ is generated by $(x_\beta)_{(rm)}\ot t^m$ acting as $x^{\hat{\nu}}_{\beta}(m)$ on $L(k\La_0)$ for $m\in\frac1{r}\Z$, $\beta\in\Delta$.
Since every $x_\beta$ can be expressed by taking brackets with $x_{\alpha_i}$ for $1\leq i\leq\ell$ and $x^{\hat{\nu}}_{\nu\alpha_i}(m)$ can also be expressed as $x^{\hat{\nu}}_{\alpha_i}(m)$ by using (\ref{1a}), the standard module $L(k\La_0)$ is spanned by noncommutative monomials in $x^{\hat{\nu}}_{\pm\alpha_i}(m)$, $i=1,\cdots,l$, $m\in\frac1{r}\Z$.
By using the vertex operator formula (\ref{voformula}), we can express $x^{\hat{\nu}}_{-\alpha_i}(m)$ in terms of $x^{\hat{\nu}}_{\alpha_i}(m^\prime)$, $e_{\alpha_i}$ and a polynomial in $U(\hat{\heh}[\nu])$.
From \eqref{[h,x]} and \eqref{ehe}, we can move the elements of $\hat{\heh}[\nu]^-$ to the left and the elements of $\hat{\heh}[\nu]^+$ to the right.
Since $h(n)v_0=0$ for $n>0$, we see that (1) holds.
$U(\hat{\heh}[\nu]^-)$ is spanned by the coefficients of $E^-(-\alpha_i,z)$.
Therefore we obtain (2) from (1) by using the vertex operator formula (\ref{voformula}) for $q=0$ and the commutation relation (\ref{Ehcom}).
\end{proof}
The second statement in Lemma 5 implies that
\[\{e_\mu v\mid\mu\in Q, v\in \mathcal{B}_W\}\]
spans $L(k\La_0)$.
But this is not a basis. 

For the proofs of Proposition \ref{prop:span} and the main theorem, 
we introduce a linear order on the quasi-particle monomials in $B_W$ following \cite{BS}.
For two monomials $b$ abd $\bar{b}$ with charge-types $\mathcal{R}'$ and 
$\bar{\mathcal{R}}'=(\bar{n}_{\bar{r}_l^{(1)},l},\ldots,\bar{n}_{1,l};\ldots;\bar{n}_{\bar{r}_1^{(1)},1},\ldots,\bar{n}_{1,1})$ and
with energies $(m_{r_l^{(1)},l},\ldots,m_{1,1})$ and $(\bar{m}_{\bar{r}_l^{(1)},l},\ldots,\bar{m}_{1,1})$, we write $b<\bar{b}$ if one 
of the follwoing conditions holds
\begin{enumerate}
\item $\mathcal{R}'<\bar{\mathcal{R}}'$
\item $\mathcal{R}'=\bar{\mathcal{R}}'$ and $(m_{r_l^{(1)},l},\ldots,m_{1,1})<(\bar{m}_{\bar{r}_l^{(1)},l},\ldots,\bar{m}_{1,1})$
\end{enumerate}
where we write $\mathcal{R}'<\bar{\mathcal{R}}'$ if there exists $i$ and $s$ such
that $r_j^{(1)}=\bar{r}_j^{(1)},n_{t,j}=\bar{n}_{t,j}$ for $j<i,1\le t\le r_j^{(1)}$ and 
$n_{1,i}=\bar{n}_{1,i},n_{2,i}=\bar{n}_{2,i},\ldots,n_{s-1,i}=\bar{n}_{s-1,i},n_{s,i}<\bar{n}_{s,i}$
or $n_{t,i}=\bar{n}_{t,i}$ for $1\le t\le r_i^{(1)}$, $r_i^{(1)}<\bar{r}_i^{(1)}$. In the case that
$\mathcal{R}=\bar{\mathcal{R}}'$, we apply this definition to the sequences of energies
to similarly define $(m_{r_l^{(1)},l},\ldots,m_{1,1})<(\bar{m}_{\bar{r}_l^{(1)},l},\ldots,\bar{m}_{1,1})$.

Set 
\[
B_H=\left\{h_{\alpha_l}\cdots h_{\alpha_1}\left|
\begin{array}{l}
h_{\alpha_i}=\alpha_i(-m_{t_i,i})^{n_{t_i,i}}\cdots\alpha_i(-m_{1,i})^{n_{1,i}},i=1,\ldots l,\\
t_i\in\Z_{\ge0},m_{t_i,i}>\cdots>m_{1,i},m_{p,i}\in\rho_i\mathbb{N},n_{p,i}\in\mathbb{N}
\end{array}\right.\right\}
\]
and $B'_W=B_W\cap M'_{QP}$.
The following proposition can be proved in the same way as Lemma 2.3 in \cite{BKP}.
\begin{proposition} \label{prop:span}
The set $\mathcal{B}_L=\{e_\mu hbv_0\mid \mu\in Q,~h\in B_H,~b\in B'_W\}$ spans $L(k\La_0)$.
\end{proposition}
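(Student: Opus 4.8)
The plan is to start from the spanning set $\{e_\mu v \mid \mu\in Q,\ v\in\mathcal{B}_W\}$ for $L(k\La_0)$, which was already established as a corollary to Lemma~\ref{spanning set}(2), and then to ``straighten'' an arbitrary $bv_0$ with $b\in B_W$ into the desired form $e_\mu h b' v_0$ with $h\in B_H$ and $b'\in B'_W$. Concretely, I would first observe that $L(k\La_0)=U(\hat{\heh}[\nu]^-)QW(k\La_0)$ from Lemma~\ref{spanning set}(1), and that $W(k\La_0)$ itself is spanned by $\mathcal{B}_W$ by Theorem~\ref{th:BS}; so every vector is a linear combination of terms $h'\,e_\mu\,b v_0$ with $h'$ a monomial in $\hat{\heh}[\nu]^-$ and $b\in B_W$. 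Using \eqref{ehe} to commute $e_\mu$ past $h'$, and reordering the factors of $h'$ to match the shape prescribed in the definition of $B_H$ (grouping by color $\alpha_i$, enforcing strictly decreasing energies within each color, absorbing repeated modes into powers), one reduces $h'$ to an element of $B_H$ up to lower terms, where ``lower'' is measured in the linear order on the quasi-particle part together with the standard filtration on $U(\hat{\heh}[\nu]^-)$ by number of Heisenberg generators.

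The second and more substantive step is to show that the quasi-particle monomial $b\in B_W$ can itself be replaced by one lying in $B'_W = B_W\cap M'_{QP}$, modulo $e_\mu h b'' v_0$ with $b''$ strictly smaller in the order $<$ on monomials defined just above the proposition. Here one uses the commutation relation \eqref{[h,x]} of $h(m)$ with the quasi-particles (which only shifts energies and produces scalars), the relation \eqref{ex} describing how $e_\alpha$ conjugates a quasi-particle (again only an energy shift and a scalar $C(\alpha,\beta)$), and the relations among the $x^{\hat\nu}_{n\alpha_i}(m)$ coming from the twisted vertex-operator calculus — most importantly the relation in the (commented-out) Lemma that expresses the product of quasi-particles on $v_0$ as a $W(k\La_0)$-valued series after clearing the $\prod_j(1-\eta^j z^{1/r}_{q,i-1}/z^{1/r}_{p,i})$ factors. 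These are exactly the tools that, in \cite{BS}, reduce an arbitrary quasi-particle monomial to one in $B_W$; the point of Lemma~2.3 in \cite{BKP}, which I would follow line by line, is that the same reduction carried out on $L(k\La_0)$ rather than on $W(k\La_0)$ forces the extra relations $x^{\hat\nu}_{(k+1)\alpha_i}(z)=0$ and the level-$k$ constraint to cut the index set down to $B'_W$, the surplus being pushed into strictly lower monomials together with a new $h\in B_H$ and a translation $e_\mu$.

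The third step is the induction that makes the two reductions above terminate. One fixes the weight and degree, notes that for fixed weight and degree there are only finitely many $(\mu, \text{charge-type}, \text{energy-type})$ data appearing, and performs descending induction with respect to the lexicographic combination of: the number of Heisenberg factors in $h$, and then the order $<$ on the quasi-particle monomial $b$. At each stage, applying \eqref{ehe}, \eqref{ex}, \eqref{[h,x]}, the vertex-operator formula \eqref{voformula} (to eliminate any $x^{\hat\nu}_{-\alpha_i}$ that reappears), and the $B_W$-reduction of \cite{BS}, rewrites a given $e_\mu h b v_0$ as a term already in the claimed form plus a linear combination of $e_{\mu'} h' b' v_0$ that are strictly lower; the induction hypothesis then finishes it.

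The main obstacle I expect is bookkeeping the commutators cleanly: when one moves $e_\mu$ and the Heisenberg factors of $h$ across a long quasi-particle monomial, relations \eqref{ex} and \eqref{[h,x]} shift the energies $m_{p,i}$, and one must check that the conditions $(C1)$--$(C3)$ defining $B_W$ (and the sub-conditions defining $M'_{QP}$) are respected after the shift, or else that any violation produces a strictly smaller monomial so the induction still goes through. This is precisely the delicate part of Lemma~2.3 in \cite{BKP}, and adapting it requires being careful that the twisting only enters through the factors $\rho_i=\tfrac12\pair{(\alpha_i)_{(0)},(\alpha_i)_{(0)}}$, the scalars $C(\alpha,\beta)$, and the substitution $z^{1/r}\to\eta^{-1}z^{1/r}$ from \eqref{1a}, none of which affect the combinatorial structure of the order $<$. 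Once that compatibility is verified, the spanning statement follows exactly as in the untwisted case.
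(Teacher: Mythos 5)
Your overall skeleton is the paper's: start from Lemma~\ref{spanning set}(1) together with Theorem~\ref{th:BS}, then remove the offending quasi-particles and control the corrections with the linear order $<$, following \cite[Lemma 2.3]{BKP}. However, the engine of the reduction is misidentified. Passing from $B_W$ to $B'_W=B_W\cap M'_{QP}$ means eliminating quasi-particles of charge exactly $k$, and the relation $x^{\hat{\nu}}_{(k+1)\alpha_i}(z)=0$ (your ``level-$k$ constraint'') is of no help there: it kills charges greater than $k$, which are already excluded in the definition of $B_W$. What actually removes a factor $x^{\hat{\nu}}_{k\alpha_i}(m)$ is the vertex operator formula \eqref{voformula} with $p=k$, $q=0$, which rewrites it purely in terms of $e_{\alpha_i}$, a power of $z$, and coefficients of $E^{\mp}(\alpha_i,z)$; these are then moved to the left (the $e_{\alpha_i}$ and the negative Heisenberg modes) and to the right (the positive modes) using \eqref{Ehcom} and \eqref{ehe}, which is how the surplus becomes a new $h\in B_H$ and a translation $e_\mu$. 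In your write-up \eqref{voformula} is invoked only ``to eliminate any $x^{\hat{\nu}}_{-\alpha_i}$ that reappears'', which is its role in Lemma~\ref{spanning set}, not here (no $x^{\hat{\nu}}_{-\alpha_i}$ occurs in $b\in B_W$). Without this step your reduction from $B_W$ to $B'_W$ has no mechanism.

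Second, your termination argument runs in the wrong direction. After a charge-$k$ quasi-particle is traded for $e_{\alpha_i}$ and Heisenberg factors, the leftover monomial $b'$ lies in $M'_{QP}$ but need not satisfy $(C1)$--$(C3)$; re-expanding $b'v_0$ in the basis of Theorem~\ref{th:BS} produces monomials $b''$ which may again contain charge-$k$ quasi-particles and which are strictly \emph{greater} in the order $<$ --- this is the monotonicity used in the paper and in \cite{BKP} --- not strictly smaller as you assert. The iteration therefore terminates because, at fixed degree and $\heh_{(0)}$-weight, the order is bounded above on the relevant set of monomials: it is an ascending argument with an upper bound, not the descending induction you set up. Your finiteness remark could in principle salvage termination once a correct monotonicity statement is in place, but as formulated the claim that the corrections are ``strictly lower'' is false, so the induction does not close as written.
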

\begin{proof}
By Lemma \ref{spanning set} (1), the set of vectors
\[\{e_\mu hbv_0\mid \mu\in Q,h\in B_H,b\in B_W\}\]
spans $L(k\La_0)$.
It suffices to check that the arguments used in the proof of \cite[Lemma 2.3]{BKP}
also hold for our case.
By the vertex operator formula (\ref{voformula}), a quasi-particle $x^{\hat{\nu}}_{k\alpha_i}(m)$ is expressed by $e_{\alpha_i}$ and monomials in $U(\hat{\heh}[\nu]^\pm)$.
Then we can move $e_\alpha$ and elements of $\hat{\heh}[\nu]^-$ to the left, and  elements of $\hat{\heh}[\nu]^+$ to the right by the relations (\ref{Ehcom}) and \eqref{ehe}.
As a result, we can express $e_\mu hbv_0$ as a linear combination of vectors $e_{\mu^\prime}h^\prime b^\prime v_0$, where $\mu^\prime\in Q, h^\prime\in B_H$, and $b^\prime\in M'_{QP}$.
Note that $b^\prime$ contains no quasi-particles $x^{\hat{\nu}}_{k\alpha_i}(m)$, but 
is not necessarily in $B_W$.
Take any vector $e_{\mu^\prime}h^\prime b^\prime v_0$ that is not in $B_W$, if it exists. 
Since $b'v_0\in W(k\La_0)$, it is expressed as a linear combination of vectors
$b''v_0$ such that $b''\in B_W$ by Theorem \ref{th:BS}. $b''$ may contain
$x^{\hat{\nu}}_{k\alpha_i}(m)$, but it is greater than $b$ with respect to the order "$<$" 
defined above. Since the set $B_W$ is upper bounded with respect to this order,
the process of eliminating quasi-particles $x^{\hat{\nu}}_{k\alpha_i}(m)$ end in finitely
many steps and $b$ in any term in the final linear combination belongs to $B_W\cap
M'_{QP}$.
\end{proof}

\subsection{The main theorem}
Consider the decomposition
\[L(k\La_0)=\bigoplus_{s\in\Z}L(k\La_0)_s,\quad {\rm where}\quad L(k\La_0)_s=\bigoplus_{s_2,\cdots,s_l\in\Z}L(k\La_0)_{s_l\alpha_l+\cdots+s_2\alpha_2+s\alpha_1}.\]
To prove our main theorem, we use the Georgiev-type projection such that
\[\pi_{\mathcal{R}_{\alpha_1}}:L(k\La_0)\rightarrow L(\La_0)_{r_1^{(1)}}\ot\cdots\ot L(\La_0)_{r_1^{(k)}},\]
where $\mathcal{R}_{\alpha_1}=(r_1^{(1)},r_1^{(2)},\ldots,r_1^{(k)})$ is a fixed dual-charge-type for the color 1 and $r_1=\sum_{s=1}^kr_1^{(s)}$.
This projection is naturally generalized to $L(k\La_0)[[w_{t_l,l},\ldots,w_{1,1},z_{r_l^{(1)},l},\ldots,z_{1,1}]]$.
We also denote this generalization by $\pi_{\mathcal{R}_{\alpha_1}}$.
Set $\alpha_i(z)_-=\sum_{m<0}\alpha_i(m)z^{-m-1}$.
We consider the vector
\[e_\mu\alpha_l(-m_{t_l,l}^\prime)^{n_{t_l,l}^\prime}\cdots\alpha_1(-m_{1,1}^\prime)^{n_{1,1}^\prime}x^{\hat{\nu}}_{n_{r_l^{(1)},l}\alpha_l}(m_{r_l^{(1)},l})\cdots x^{\hat{\nu}}_{n_{1,1}\alpha_1}(m_{1,1})v_0\]
with dual-charge-type $\mathcal{R}=(\mathcal{R}_{\alpha_l},\cdots,\mathcal{R}_{\alpha_1})$.
Recall that the image of this vector with respect to $\pi_{\mathcal{R}_{\alpha_1}}$ coincides with the coefficient of the corresponding projection of the generating function
\[e_\mu\alpha_l(w_{t_l,l})_-^{n_{t_l,l}^\prime}\alpha_1(w_{1,1})_-^{n_{1,1}^\prime}x^{\hat{\nu}}_{n_{r_l^{(1)},l}\alpha_l}(z_{r_l^{(1)},l})\cdots x^{\hat{\nu}}_{n_{1,1}\alpha_1}(z_{1,1})v_0.\]

In order to prove the main theorem, we need a generalization of the twisted vertex
operator $Y^{\hat{\nu}}(a,z)$ defined in section \ref{subsec:TVO} to the case where
$a$ belongs to an extension of the weight lattice $P$ of $\geh$. We do not repeat 
its definition. See \cite{DL}. 

\begin{proposition} \label{prop:DL}
Let $a,b$ be elements of an extension $\hat{P}_\nu$ of $P$ such that
$\pair{\bar{a},\bar{b}}\in\Z$. Then we have the following commutation relation for the 
twisted vertex operators.
\[
Y^{\hat{\nu}}(a,z_1)Y^{\hat{\nu}}(b,z_2)=(-1)^{\pair{\bar{a},\bar{b}}}c_\nu(\bar{a},\bar{b})
Y^{\hat{\nu}}(b,z_2)Y^{\hat{\nu}}(a,z_1)
\]
Here $c_\nu(\bar{a},\bar{b})$ is some constant which belongs to $\C^\times$.
\end{proposition}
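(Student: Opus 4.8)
The plan is to derive this commutation relation from the twisted Jacobi identity for the (generalized) twisted vertex operators $Y^{\hat\nu}$ acting on the extended space built from $\hat{P}_\nu$, exactly as in the untwisted case one derives locality/mutual commutativity from the Jacobi identity. Concretely, I would start from the twisted Jacobi identity
\[
z_0^{-1}\delta\!\left(\tfrac{z_1-z_2}{z_0}\right)Y^{\hat\nu}(a,z_1)Y^{\hat\nu}(b,z_2)
-z_0^{-1}\delta\!\left(\tfrac{z_2-z_1}{-z_0}\right)Y^{\hat\nu}(b,z_2)Y^{\hat\nu}(a,z_1)
=\tfrac1r z_2^{-1}\sum_{j=0}^{r-1}\delta\!\left(\eta^j\bigl(\tfrac{z_1-z_0}{z_2}\bigr)^{1/r}\right)Y^{\hat\nu}\!\bigl(Y(\hat\nu^j a,z_0)b,z_2\bigr),
\]
which holds on $\hat P_\nu$ by the results of \cite{DL} (Corollary 5.5 there). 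The key point is the hypothesis $\pair{\bar a,\bar b}\in\Z$: this guarantees that $Y(\hat\nu^j a,z_0)b$ involves only integral powers of $z_0$ (no branch cuts in $z_0^{1/r}$), so the right-hand side of the Jacobi identity is genuinely a Laurent series in $z_0$ rather than a series in $z_0^{1/r}$. Consequently, taking $\mathrm{Res}_{z_0}$ of both sides — equivalently, looking at the $z_0^{-1}$ coefficient — kills the right-hand side unless $Y(\hat\nu^j a,z_0)b$ has a pole at $z_0=0$, and even then the delta-function sum on the right, after summing over $j$, produces only the "integral part" contributions.

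The cleanest route: multiply the Jacobi identity by $z_0^0$ and extract the coefficient of $z_0^{-1}$. On the left this yields $Y^{\hat\nu}(a,z_1)Y^{\hat\nu}(b,z_2)\,\iota_{z_1,z_2}-Y^{\hat\nu}(b,z_2)Y^{\hat\nu}(a,z_1)\,\iota_{z_2,z_1}$ evaluated on the appropriate expansions; on the right one gets a sum of $\partial_{z_2}$-derivatives of $\delta$-type expressions. Since $\pair{\bar a,\bar b}\in\Z$ the right-hand side collapses: the product $Y^{\hat\nu}(a,z_1)Y^{\hat\nu}(b,z_2)$, computed via the normal-ordering formula \eqref{tvo} together with the commutation relation \eqref{comm rel of E}, carries the scalar factor
\[
\prod_{j=0}^{r-1}\Bigl(1-\eta^j\tfrac{z_2^{1/r}}{z_1^{1/r}}\Bigr)^{\pair{\nu^j\bar a,\bar b}}
\]
coming from moving $E^+(-\bar a,z_1)$ past $E^-(-\bar b,z_2)$, plus a scalar from commuting the group-algebra elements $a$ and $b$ in $\hat P_\nu$, which is precisely the commutator pairing $c_\nu(\bar a,\bar b)$ (a root of unity / nonzero constant). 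Reversing the order produces the same expression with $z_1\leftrightarrow z_2$ and $\bar a\leftrightarrow\bar b$. The ratio of the two scalar prefactors is then a rational function in $z_1^{1/r},z_2^{1/r}$ which, because $\sum_j\pair{\nu^j\bar a,\bar b}=r\pair{\bar a_{(0)},\bar b_{(0)}}\in\Z$ and each exponent is an integer, simplifies to $(-1)^{\pair{\bar a,\bar b}}$ times a constant — and that constant is $c_\nu(\bar a,\bar b)$. I would carry this out by a direct manipulation of the product of $r$ factors $(1-\eta^j u)$ versus $(1-\eta^{-j}u^{-1})$, using $1-\eta^{-j}u^{-1}=-\eta^{-j}u^{-1}(1-\eta^j u)$ repeatedly; the $-\eta^{-j}u^{-1}$ factors recombine, via $\prod_j\eta^{-j}=(\pm1)$ and $\prod_j u^{-\pair{\nu^j\bar a,\bar b}}=u^{-r\pair{\bar a_{(0)},\bar b_{(0)}}}$ being an integral power of $u=z_2^{1/r}/z_1^{1/r}$, hence a genuine Laurent monomial, into the sign $(-1)^{\pair{\bar a,\bar b}}$ (one checks the overall power collapses using $\pair{\bar a,\bar b}\in\Z$) and the constant $c_\nu$.

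The main obstacle is bookkeeping the branch-cut/monodromy issue: a priori $Y^{\hat\nu}(a,z)$ lives in fractional powers $z^{1/r}$, so "commuting" two such operators is only meaningful once one knows the product is single-valued in $z_2^{1/r}/z_1^{1/r}$ up to an overall integral power of $z_2/z_1$, which is exactly where $\pair{\bar a,\bar b}\in\Z$ enters and must be used carefully (it forces $r\pair{\bar a_{(0)},\bar b_{(0)}}=\sum_j\pair{\nu^j\bar a,\bar b}$ and $\pair{\bar a-\bar a_{(0)},\bar b}$ to both be integers after summing, so the fractional parts cancel in the product but not term-by-term). I would handle this by first establishing that $Y(\hat\nu^j a,z_0)b\in(\hat P_\nu)[[z_0,z_0^{-1}]]$ — no fractional powers — under the hypothesis, then invoking the Jacobi identity and taking $\mathrm{Res}_{z_0}$ to get commutativity of the "fields" in the generalized-VOA sense, and finally reading off the explicit scalar from \eqref{tvo} and \eqref{comm rel of E} as above. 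Everything else (the $E^\pm$ recombination, the $z^{\bar a_{(0)}+\cdots}$ factors commuting up to the $z$-power shift recorded in $\pair{\bar a,\bar b}$, the cocycle/commutator scalar from $\hat P_\nu$) is routine and parallels \cite{DL} and the computations already used in the proof of the vertex operator formula Lemma above.
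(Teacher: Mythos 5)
Your route is genuinely different from the paper's: the paper proves this proposition in one line, by quoting the generalized commutativity result for relative twisted vertex operators, \cite[Theorem 5.2]{DL}, specialized to $\mathbf{h}_*=0$, with the hypothesis $\pair{\bar{a},\bar{b}}\in\Z$ entering only through the delta-function identity $z^m\delta(z)=\delta(z)$ for $m\in\Z$. You instead try to rederive the statement from the twisted Jacobi identity (\cite[Corollary 5.5]{DL}) plus an explicit normal-ordering computation, and there is a genuine gap in both prongs of that plan. Taking $\mathrm{Res}_{z_0}$ of the Jacobi identity does not kill the right-hand side under the integrality hypothesis: the residue of the right-hand side is exactly the commutator $[Y^{\hat{\nu}}(a,z_1),Y^{\hat{\nu}}(b,z_2)]$, written as $\tfrac1r z_2^{-1}\sum_j \mathrm{Res}_{z_0}\,\delta\bigl(\eta^j((z_1-z_0)/z_2)^{1/r}\bigr)Y^{\hat{\nu}}(Y(\hat{\nu}^ja,z_0)b,z_2)$, and it vanishes only if every $Y(\hat{\nu}^ja,z_0)b$ is regular at $z_0=0$, i.e.\ only under a positivity condition on the pairings $\pair{\nu^j\bar{a},\bar{b}}$ for all $j$. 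Integrality of $\pair{\bar{a},\bar{b}}$ gives neither that regularity nor even integrality of the individual $\pair{\nu^j\bar{a},\bar{b}}$; your sentence that the sum over $j$ "produces only the integral part contributions" is not an argument.

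The same issue undermines your "cleanest route." The manipulation $1-\eta^{-j}u^{-1}=-\eta^{-j}u^{-1}(1-\eta^ju)$ may be raised to the power $\pair{\nu^j\bar{a},\bar{b}}$ and used to compare the two operator orderings only when that exponent is a nonnegative integer, because the prefactors in the two orderings are expansions of the same rational function in the two different domains $|z_1|>|z_2|$ and $|z_2|>|z_1|$; when some exponent is negative the two expansions differ by $\delta$-supported terms and exact commutation up to a constant simply fails (already in the untwisted case with $\pair{\alpha,\beta}=-1$, where $[x_\alpha(m),x_\beta(n)]\neq0$). So your computation tacitly assumes $\pair{\nu^j\bar{a},\bar{b}}\ge0$ for all $j$ --- which does hold for the pairs $(e_{\lambda_i},e_{\alpha_j})$ to which the proposition is later applied, but is not among the stated hypotheses and cannot be deduced from $\pair{\bar{a},\bar{b}}\in\Z$. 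To repair your argument you must either verify regularity of $Y(\hat{\nu}^ja,z_0)b$ for the elements actually used (and say so), or do as the paper does and invoke \cite[Theorem 5.2]{DL} directly, where the only point requiring the integrality hypothesis is the use of $z^m\delta(z)=\delta(z)$.
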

\begin{proof}
We use \cite[Theorem 5.2]{DL} in the case when $\mathbf{h}_*=0$. The condition 
$\pair{\bar{a},\bar{b}}\in\Z$ is important, since we use the property 
$z^m\delta(z)=\delta(z)$ ($m\in\Z$) for the formal delta function.
\end{proof}

Let $\lambda_i$ ($i=1,\ldots,\ell$) be the fundamental weights of $\geh$ and 
set
\[
Y^{\hat{\nu}}(e_{\lambda_i},z)=\sum_{m\in\frac1{r}\Z}A_{\lambda_i}(m)z^{-m+\pair{(\lambda_i)_{(0)},(\lambda_i)_{(0)}}/2
-\pair{\lambda_i,\lambda_i}/2}.
\]
From Proposition \ref{prop:DL}, we have
\begin{equation} \label{Ax}
A_{\lambda_i}(m)Y_{\alpha_j}^{\hat{\nu}}(n)=(-1)^{\delta_{ij}}c_\nu(\lambda_i,\alpha_j)Y_{\alpha_j}^{\hat{\nu}}(n)A_{\lambda_i}(m)
\end{equation}
on $V_P^T$, which is the extended space of $V_L^T$ by
enlarging the root lattice $L$ to the weight lattice $P$. We also have
\begin{equation} \label{[h,A]}
[h(n),A_{\lambda_i}(m)]=\pair{h_{rn},(\lambda_i)_{(-rn)}}A_{\lambda_i}(m+n).
\end{equation}
Moreover, 
\begin{equation} \label{A1}
A_{\lambda_i}(m)\mathbf{1}_T\in U(\heh[\nu]^-)e_{\lambda_i}\text{ for }m\ge0
\text{ and }A_{\lambda_i}(0)\mathbf{1}_T=e_{\lambda_i}.
\end{equation}

\begin{theorem} \label{th:main1}
The set $\mathcal{B}_L$ is a basis of $L(k\La_0)$.
\end{theorem}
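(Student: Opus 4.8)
Since Proposition~\ref{prop:span} already gives that $\mathcal{B}_L$ spans $L(k\La_0)$, what remains is linear independence, and the plan is to run the argument of the corresponding untwisted statement in \cite{BKP}, which itself goes back to Georgiev's projection method \cite{G1,G2}. Suppose $\sum_{\mu,h,b}c_{\mu,h,b}\,e_\mu hbv_0=0$ is a nontrivial relation in $L(k\La_0)$. Since $c=k$ is scalar here, splitting by the $\heh_{(0)}\oplus\C d$-weight allows us to assume that all triples $(\mu,h,b)$ that occur share one weight. By \eqref{eh}, \eqref{ed} and the definitions of the color-, charge- and dual-charge-types, the $\heh_{(0)}$-weight equals $k\mu_{(0)}-\sum_i r_i(\alpha_i)_{(0)}$ and the $d$-eigenvalue is determined by $\mu$, by the energies of $h$, and by the charges and energies of $b$; since the $(\alpha_i)_{(0)}$ are linearly independent and the quasi-particle modes are bounded above in view of (C2), only finitely many triples remain, though $\mu$ and the color-type of $b$ are still not pinned down individually.

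Next I would separate by charge-type using the Georgiev-type projections $\pi_{\mathcal{R}_{\alpha_1}}$, applied color by color. Taking the color-$1$ dual-charge-type minimal with respect to ``$<$'' among those appearing and applying the associated $\pi_{\mathcal{R}_{\alpha_1}}$ to the relation, one rewrites the generating function attached to each $e_\mu hbv_0$ by expanding the products of twisted vertex operators with the commutation relation \eqref{comm rel of E} and moving the Heisenberg and auxiliary factors through via \eqref{[h,x]} and \eqref{Ax}; here the operators $A_{\lambda_i}(m)$ on the extended module $V_P^T$, with \eqref{Ax}--\eqref{A1}, serve as the twisted substitute for Georgiev's weight-lattice operators. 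As in \cite[\S3]{BS} and \cite{G1}, the projected vector lies in $L(\La_0)_{r_1^{(1)}}\ot\cdots\ot L(\La_0)_{r_1^{(k)}}$, is nonzero precisely for the $b$'s of that color-$1$ type, and distributes the color-$1$ quasi-particles one per tensor factor in an order-preserving manner. Iterating over the colors $2,\ldots,l$ forces all $b$ appearing to share one dual-charge-type $\mathcal{R}$, hence one charge-type $\mathcal{R}'$ and color-type, after which the weight constraint of the first paragraph forces a single value of $\mu$ as well.

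With $\mu$, the charge-type of $b$ and the total weight fixed, it remains to prove independence of $\{e_\mu hbv_0\}$ as $h$ runs over $B_H$ and $b$ over the $B'_W$-monomials of that charge-type. Since $-\mu\in Q$ and $e_\mu e_{-\mu}\in\C^\times$, the map $e_\mu$ is invertible on $L(k\La_0)$, so one may take $\mu=0$. I would then commute $h$ through $b$ by \eqref{[h,x]} --- the commutator terms only shift quasi-particle modes and are strictly comparable in ``$<$'', so $hbv_0\equiv bhv_0$ modulo strictly larger vectors --- and then read off coefficients in each tensor factor $L(\La_0)=S[\nu]\ot U_T$: the Heisenberg contribution is a monomial in the polynomial algebra $S[\nu]=S(\hat{\heh}[\nu]^-)$, hence manifestly free, while the quasi-particle contribution is controlled by Theorem~\ref{th:BS}. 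A triangularity argument with respect to ``$<$'', refined to record the Heisenberg data, then contradicts the assumed relation.

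The step I expect to be the main obstacle is precisely this order bookkeeping: one must fix one refined linear order on $\mathcal{B}_L$ for which the Georgiev projections, the removal of $e_\mu$, and the commuting of $h$ past $b$ together with the re-expansion (via Theorem~\ref{th:BS}) of the monomials so produced --- which may fall outside $B'_W$ or even carry charge-$k$ quasi-particles again --- are all triangular, so that a clean leading-term argument closes the proof. This is the part that runs ``in the same way'' as \cite{BKP}, but the fractional modes and the twisted $2$-cocycles $\ep_C,\ep_{C_0}$ require care at every turn. A secondary point to check is that discarding charge-$k$ quasi-particles (replacing $B_W$ by $B'_W=B_W\cap M'_{QP}$) is exactly compensated by the $e_\mu$ factor through the vertex operator formula \eqref{voformula}, so that $\mathcal{B}_L$ is not over-complete. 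Note finally that the restriction to types $A_{2l-1}^{(2)},D_{l+1}^{(2)},E_6^{(2)},D_4^{(3)}$ is needed only to invoke Theorem~\ref{th:BS}, which is proved in \cite{BS} for exactly these cases.
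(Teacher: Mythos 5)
Your outline does follow the paper's strategy (spanning from Proposition~\ref{prop:span}, reduction to a fixed weight, Georgiev-type projections color by color, the operators $A_{\lambda_i}(m)$, and a triangularity argument in the order ``$<$'' resting on Theorem~\ref{th:BS}), but the pivotal reduction in your second paragraph does not hold as stated. The projection $\pi_{\mathcal{R}_{\alpha_1}}$ attached to the dual of the maximal color-$1$ charge-type (your ``minimal dual-charge-type'') is \emph{not} zero on monomials of strictly smaller color-$1$ charge-type with the same total color-$1$ charge: on $(V_L^T)^{\ot k}$ a color-$1$ quasi-particle of charge $n$ acts as a sum of terms that raise the color-$1$ weight of $n$ distinct tensor factors by one each, so for instance with $k=3$ a monomial of charge-type $(1,1,1)$ has a nonzero component in $L(\La_0)_2\ot L(\La_0)_1\ot L(\La_0)_0$, which is exactly the projection space belonging to the charge-type $(2,1)$. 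Hence the projections do not isolate a single charge-type, the weight constraint therefore does not pin down a single $\mu$, and your step ``$e_\mu$ is invertible, so take $\mu=0$'' is unjustified. The paper uses the projection for a weaker purpose: after normalizing (via the weight constraint and an overall translation) so that the summands with maximal color-$1$ charge have $\mu$ with zero $\alpha_1$-coordinate, the facts that every $b\in B'_W$ has $r_1^{(k)}=0$ and that $e_{\alpha_1}$ acts diagonally show that $\pi_{\mathcal{R}_{\alpha_1}}$ annihilates precisely the summands whose $\mu$ has positive $\alpha_1$-coordinate; iterating over the colors reduces to $\mu=0$, while the different charge-types remain mixed and are dealt with only later through the order.

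The second gap is that the place where the real work happens --- linear independence of the vectors $hbv_0$ --- is not carried out. A vector $hbv_0$ does not decompose factorwise along $L(\La_0)=S[\nu]\ot U_T$, because the quasi-particles do not commute with the Heisenberg algebra, so ``$S[\nu]$ is free plus Theorem~\ref{th:BS}'' is not by itself an argument; you yourself flag this bookkeeping as the main obstacle. The paper supplies the two missing ingredients at exactly this point: first, multiply the relation on the left by the transposed Heisenberg monomial $\bar h$ (with positive modes), which by \eqref{[h,x]} turns the term $c_{h,b}hbv_0$ with $b$ minimal into a nonzero multiple of $bv_0$ modulo terms $h''b''v_0$ with $b''>b$; second, run Georgiev's Theorem 5.2 argument using the operators $A_{\lambda_i}(m)$ on $V_P^T$ together with \eqref{Ax}, \eqref{[h,A]}, \eqref{A1} and the projection identity that replaces (5.25) of \cite{G1}, to conclude $c_{h,b}=0$, and then iterate. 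Your proposal mentions $A_{\lambda_i}$ only as auxiliary bookkeeping and defers the triangularity, so as written it is an outline of the correct strategy rather than a proof.
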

\begin{proof}
We should prove the linear independence of $\mathcal{B}_L$.
We consider a linear combination of vectors in $\mathcal{B}_L$,
\begin{align}\label{fc}
\sum c_{\mu,h,b}e_\mu hbv_0=0
\end{align}
of the fixed degree and $\heh_{(0)}$-weight.
From \eqref{eh},  for a $\heh_{(0)}$-weight $\rho$, the action of $e_\mu$ maps 
the weigt space $V_\rho$ to $V_{\rho+k\mu}$.
Hence, we may assume that a summand in \eqref{fc} with the maximal charge of color 1, 
${\rm chg}_1b$, has $\mu$ with $\alpha_1$ coordinate zero.
Namely, we assume that summands appear in the form
\begin{itemize}
\item[(A)] $e_\mu hbv_0$ with ${\rm chg}_1b=r_1$ and $\mu=c_l\alpha_l+\cdots+c_2\alpha_2,\quad {\rm or}$
\item[(B)] $e_{\bar{\mu}}\bar{h}\bar{b}v_0$ with ${\rm chg}_1\bar{b}<r_1$ and $\bar{\mu}=\bar{c}_l\alpha_l+\cdots+\bar{c}_1\alpha_1,\quad{\rm where}~\bar{c}_1>0$.
\end{itemize}
Among the vectors $v=e_\mu hbv_0$ with ${\rm chg}_1v=r_1$, we choose a vector with the maximal charge-type $\mathcal{R}_{\alpha_1}^\prime$ and the corresponding dual-charge-type
\[\mathcal{R}_{\alpha_1}=(r_1^{(1)},\cdots,r_1^{(k-1)})\]
for the color $i=1$ where $r_1=r_1^{(1)}+\cdots+r_1^{(k-1)}$.
Note that $r_1^{(k)}=0$ for a vector in $\mathcal{B}_L$.
Denote the Georgiev-type projection by $\pi_{\mathcal{R}_{\alpha_1}}$.
Since
\[e_{\alpha_1}({\bf1}_T \ot\cdots\ot{\bf1}_T)=e_{\alpha_1}{\bf1}_T\ot\cdots\ot e_{\alpha_1}{\bf1}_T,\]
we have
\[e_{\bar{\mu}}\bar{h}\bar{v}\in\bigoplus_{\substack{s_1,\cdots,s_{k-1}\in\Z\\
s_k>0}}L(\La_0)_{s_1}\ot\cdots\ot L(\La_0)_{s_k}.\]
Therefore, for the vectors of the form (B) we have $\pi_{\mathcal{R}_{\alpha_1}}(e_{\bar{\mu}}\bar{h}\bar{b})v_0=0$.
This means that the $\pi_{\mathcal{R}_{\alpha_1}}$ projection of the sum (\ref{fc}) contains only the summands of the form (A). Applying the same trick for the simple roots 
$\alpha_2,\ldots,\alpha_l$, we can assume $\mu=0$ in \eqref{fc}.

Consider a linear combination
\begin{align}\label{fc2}
c_{h,b}hbv_0+\sum_{b'>b}c_{h',b'}h'b'v_0=0.
\end{align}
For a monomial $h=h_l(-m_{t_l,l})^{n_{t_l,l}}\cdots h_1(-m_{1,1})^{n_{1,1}}$, set
$\bar{h}=h_l(m_{t_l,l})^{n_{t_l,l}}\cdots h_1(m_{1,1})^{n_{1,1}}$ and multiply $\bar{h}$
from left to \eqref{fc2}. Using \eqref{[h,x]}, it turns out to be
\[
c_{h,b}bv_0+\sum_{b''>b}c'_{h'',b''}h''b''v_0=0
\]
up to an overall scalar multiple.
Now, using \eqref{Ax},\eqref{[h,A]} and \eqref{A1}, one can argue in a similar way to the proof of
Theorem 5.2 of \cite{G1} to prove $c_{h,b}=0$. 
For the commutation relation of $e_{\lambda_i}$ and $Y^{\hat{\nu}}_{\alpha_j}(m)$ is
given by substituting $\alpha=\lambda_i$ in \eqref{ex0}.
In place of (5.25) of \cite{G1}, we use
\[
\pi_{\mathcal{R}_{\alpha_1}}b'x^{\hat{\nu}}_{s\alpha_i}(-s\rho_i)v_0
=\text{const }\pi_{\mathcal{R}_{\alpha_1}}b'
\left(1\ot\cdots\ot1\ot\underbrace{e_{\alpha_1}\ot\cdots\ot e_{\alpha_1}}_{s\text{ factors}}\right)v_0.
\]
Once we show $c_{h,b}=0$, substitute this relation to \eqref{fc2} and
continue the process, then one eventually shows all coefficients are zero.
%
%
\end{proof}

\section{Parafermionic bases}

\subsection{Vacuum space and twisted $\mathcal{Z}$-algebra}
Denote by ${L(k\La_0)}^{\hat{\heh}[\nu]^+}$ the vacuum space of the standard module $L(k\La_0)$ , i.e.
\begin{align}
\label{vacuumsp}
{L(k\La_0)}^{\hat{\heh}[\nu]^+} = \{v\in L(k\La_0)\mid \hat{\heh}[\nu]^+\cdot v=0\}.
\end{align}
By the Lepowsky-Wilson theorem \cite{LW} (A5.3) we have the canonical isomorphism of $d$-graded linear spaces
\begin{align}
\label{LWthm}
U(\hat{\heh}[\nu]^-)\ot L(k\La_0)^{\hat{\heh}[\nu]^+}\xrightarrow{\simeq} L(k\La_0)
\end{align}
\[h\ot u \mapsto h\cdot u\]
where
$U(\hat{\heh}[\nu]^-)\simeq S(\hat{\heh}[\nu]^-)$
is the Fock space of level $k$ for the Heisenberg subalgebra $\hat{\heh}[\nu]_{\frac1{r}\Z}$ with the action of $c$ being the multiplication by scalar $k$.
We consider the projection
\[\pi^{\hat{\heh}[\nu]^+}:L(k\La_0)\rightarrow {L(k\La_0)}^{\hat{\heh}[\nu]^+}\]
given by the direct decomposition
\begin{align}
\label{decomposition}
L(k\La_0) = {L(k\La_0)}^{\hat{\heh}[\nu]^+}\oplus \hat{\heh}[\nu]^-U(\hat{\heh}[\nu]^-)\cdot {L(k\La_0)}^{\hat{\heh}[\nu]^+}.
\end{align}
By (\ref{ehe}), we have the projective representation of {\it Q} on the vacuum space $L(k\La_0)^{\hat{\heh}[\nu]^+}$.

We set
\[\mathcal{Z}_{n\alpha}(z)=E^-(\alpha,z)^{n/k}x_{n\alpha}^{\hat{\nu}}(z)E^+(\alpha,z)^{n/k}\]
for a quasi-particle of charge {\it n} and a root $\alpha$.
It is called the $\mathcal{Z}$-operator.
Note that the action of $\mathcal{Z}$-operators commutes with 
the action of the Heisenberg algebra $\hat{\heh}[\nu]_{\frac1{r}\Z}$ on the standard module $L(k\La_0)$.
More generally, we need to define the $\mathcal{Z}$-operators for quasi-particles of charge-type $\mathcal{R}^\prime=(n_{r_l^{(1)},l},\cdots,n_{1,1})$.
For $x_{\mathcal{R}^\prime}^{\hat{\nu}}(z_{r_l^{(1)},l},\cdots,z_{1,1})=x_{n_{r_l^{(1)},l}\alpha_l}^{\hat{\nu}}(z_{r_l^{(1},l})\cdots x_{n_{1,1}\alpha_1}^{\hat{\nu}}(z_{1,1})$ of charge-type $\mathcal{R}^\prime$, we define
\begin{align}
\mathcal{Z}_{\mathcal{R}^\prime}(z_{r_l^{(1)},l},\cdots,z_{1,1})=&E^-(\alpha_l,z_{r_l^{(1)},l})^{n_{r_l^{(1)},l}/k}\cdots E^-(\alpha_1,z_{1,1})^{n_{1,1}/k}x_{\mathcal{R}^\prime}^{\hat{\nu}}(z_{r_l^{(1)},l},\cdots,z_{1,1})\nonumber \\
&\times E^+(\alpha_l,z_{r_l^{(1)},l})^{n_{r_l^{(1)},l}/k}\cdots E^+(\alpha_1,z_{1,1})^{n_{1,1}/k}.
\label{zo}
\end{align}
For convenience, we write this formal Laurent series by
\[
\mathcal{Z}_{\mathcal{R}^\prime}(z_{r_l^{(1)},l},\cdots,z_{1,1})=\sum_{m_{r_l^{(1)},l},\cdots,m_{1,1}\in\frac1{r}\Z}\mathcal{Z}_{\mathcal{R}^\prime}(m_{r_l^{(1)},l},\cdots,m_{1,1})z_{r_l^{(1)},l}^{-m_{r_l^{(1)},l}-n_{r_l^{(1)},l}}\cdots z_{1,1}^{-m_{1,1}-n_{1,1}}.
\]
Since $\mathcal{Z}$-operators act on the vacuum space and we can express quasi-particle monomials in terms of $\mathcal{Z}$-operators by reversing (\ref{zo}), we have
\[\pi^{\hat{\heh}[\nu]^+}:x_{\mathcal{R}^\prime}(z_{r_l^{(1)},l},\cdots,z_{1,1})v_0\mapsto\mathcal{Z}_{\mathcal{R}^\prime}(z_{r_l^{(1)},l},\cdots,z_{1,1})v_0.\]

Now, Theorem \ref{th:main1} implies
\begin{theorem} \label{th:main2}
The set of vectors
\[e_\mu\mathcal{Z}_{\mathcal{R}^\prime}(m_{r_l^{(1)},l},\cdots,m_{1,1})v_0\]
such that $\mu\in Q$ and the charge-type $\mathcal{R}^\prime$ and the energy-type
$(m_{r_l^{(1)},l},\cdots,m_{1,1})$ satisfy the conditions for
$B'_W$ is a basis of the vacuum space $L(k\La_0)^{\hat{\heh}[\nu]^+}$.
\end{theorem}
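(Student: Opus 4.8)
The statement is the "vacuum-space" counterpart of Theorem~\ref{th:main1}, and the natural strategy is to transport the basis $\mathcal{B}_L=\{e_\mu hbv_0\}$ of $L(k\La_0)$ through the Lepowsky--Wilson decomposition \eqref{LWthm} and the projection $\pi^{\hat{\heh}[\nu]^+}$. First I would record that, by \eqref{decomposition} together with the Lepowsky--Wilson isomorphism \eqref{LWthm}, the standard module factors as a free $U(\hat{\heh}[\nu]^-)$-module over the vacuum space; hence a subset $S\subset L(k\La_0)^{\hat{\heh}[\nu]^+}$ is a basis of the vacuum space if and only if $\{hu\mid h\in B_H,\ u\in S\}$ is a basis of $L(k\La_0)$, where $B_H$ is precisely the set of $\hat{\heh}[\nu]^-$-monomials introduced before Proposition~\ref{prop:span}. (Here one uses that $B_H$ indexes a PBW basis of $U(\hat{\heh}[\nu]^-)\simeq S(\hat{\heh}[\nu]^-)$; the restriction $m_{p,i}\in\rho_i\mathbb{N}$ is automatic because $\alpha_i(m)=(\alpha_i)_{(rm)}\ot t^m$ vanishes unless $rm\in r\rho_i\Z$.)

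Next I would identify the candidate basis of the vacuum space as $S=\{e_\mu\mathcal{Z}_{\mathcal{R}'}(m_{r_l^{(1)},l},\dots,m_{1,1})v_0\}$ with $\mathcal{R}'$ and the energies ranging over the conditions defining $B'_W$. The point is that $\pi^{\hat{\heh}[\nu]^+}$ sends the quasi-particle monomial $bv_0$ (with $b$ as in \eqref{b}) to $\mathcal{Z}_{\mathcal{R}'}(m_{r_l^{(1)},l},\dots,m_{1,1})v_0$, as recorded just before the statement, and it commutes with the $\hat{\heh}[\nu]$-action on the Fock factor; since $e_\mu$ acts on the vacuum space by the projective representation of $Q$ coming from \eqref{ehe}, we get $\pi^{\hat{\heh}[\nu]^+}(e_\mu bv_0)=e_\mu\mathcal{Z}_{\mathcal{R}'}(\cdots)v_0$. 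Thus, modulo $\hat{\heh}[\nu]^-U(\hat{\heh}[\nu]^-)\cdot L(k\La_0)^{\hat{\heh}[\nu]^+}$, each vector $hbv_0$ of $\mathcal{B}_L$ with $h=1$ reduces to an element of $S$, and conversely every element of $S$ arises this way. Combining this with the triangularity of the $\mathcal{Z}$-operator expansion in the Heisenberg modes, the set $\{h u\mid h\in B_H,\ u\in S\}$ spans $L(k\La_0)$.

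With spanning in hand, linear independence of $S$ follows directly from linear independence of $\mathcal{B}_L$ (Theorem~\ref{th:main1}): a nontrivial relation $\sum c_{\mu,\mathcal{R}',m}\,e_\mu\mathcal{Z}_{\mathcal{R}'}(m_{\bullet})v_0=0$ in the vacuum space, pulled back through the inverse of \eqref{LWthm}, would give a nontrivial relation among the vectors $\{e_\mu\,b\,v_0\mid h=1\}\subset\mathcal{B}_L$ after lower-order Heisenberg corrections are absorbed; since the leading term (no $\hat{\heh}[\nu]^-$ factor) of $\mathcal{Z}_{\mathcal{R}'}(m_\bullet)v_0$ is exactly $x^{\hat{\nu}}_{\mathcal{R}'}(m_\bullet)v_0$, the free-module structure forces all $c_{\mu,\mathcal{R}',m}=0$. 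Concretely: expand each $\mathcal{Z}$-operator via \eqref{zo}, move all Heisenberg modes to the left, and use the decomposition \eqref{decomposition} to conclude that the coefficient of $1\ot(e_\mu\,x^{\hat{\nu}}_{\mathcal{R}'}(m_\bullet)v_0)$ under \eqref{LWthm} is $c_{\mu,\mathcal{R}',m}$.

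The main obstacle I anticipate is the bookkeeping in the last step: one must check that when the $E^{\pm}(\alpha,z)^{n/k}$ factors in \eqref{zo} are expanded and commuted past one another (and past $e_\mu$), the resulting corrections from $U(\hat{\heh}[\nu]^-)$ are \emph{strictly lower} in the appropriate filtration — i.e.\ that the assignment $e_\mu\mathcal{Z}_{\mathcal{R}'}(m_\bullet)v_0\leftrightarrow e_\mu x^{\hat{\nu}}_{\mathcal{R}'}(m_\bullet)v_0$ is genuinely triangular with respect to the order "$<$" on charge-/energy-types from Section~3, so that no cancellation among distinct $(\mu,\mathcal{R}',m)$ can occur. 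This is handled by the same commutation relations already used above: \eqref{Ehcom} controls $[E^{\pm}(\alpha,z),h(n)]$, and the fractional exponents $n/k$ only rescale these, so each correction term carries a strictly positive number of $\hat{\heh}[\nu]^-$ modes and hence lies in the complementary summand of \eqref{decomposition}. Once triangularity is established, both spanning and independence are immediate consequences of Theorem~\ref{th:main1} and the freeness in \eqref{LWthm}.
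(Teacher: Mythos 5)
Your reduction of the problem via \eqref{LWthm} and your spanning argument are sound: since $e_\mu$ commutes with the nonzero Heisenberg modes, it preserves both summands of \eqref{decomposition}, so $\pi^{\hat{\heh}[\nu]^+}(e_\mu hbv_0)=0$ for $h\neq 1$ and $\pi^{\hat{\heh}[\nu]^+}(e_\mu bv_0)=e_\mu\mathcal{Z}_{\mathcal{R}'}(m_{r_l^{(1)},l},\cdots,m_{1,1})v_0$, whence these vectors span $L(k\La_0)^{\hat{\heh}[\nu]^+}$. The gap is in the independence step. What your leading-term observation actually gives is only $e_\mu\mathcal{Z}_{\mathcal{R}'}(m_\bullet)v_0=e_\mu x^{\hat{\nu}}_{\mathcal{R}'}(m_\bullet)v_0+w$ with $w\in\hat{\heh}[\nu]^-U(\hat{\heh}[\nu]^-)L(k\La_0)^{\hat{\heh}[\nu]^+}$; hence a relation $\sum c_{\mu,\mathcal{R}',m}e_\mu\mathcal{Z}_{\mathcal{R}'}(m_\bullet)v_0=0$ only says that $\sum c_{\mu,\mathcal{R}',m}e_\mu x^{\hat{\nu}}_{\mathcal{R}'}(m_\bullet)v_0$ lies in $\hat{\heh}[\nu]^-U(\hat{\heh}[\nu]^-)L(k\La_0)^{\hat{\heh}[\nu]^+}$. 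To force $c_{\mu,\mathcal{R}',m}=0$ you must know that this subspace meets the span of the $h=1$ basis vectors of $\mathcal{B}_L$ trivially, and that is exactly equivalent to the statement being proved; Theorem \ref{th:main1} alone does not give it. In particular, ``the coefficient of $1\ot(e_\mu x^{\hat{\nu}}_{\mathcal{R}'}(m_\bullet)v_0)$ under \eqref{LWthm}'' is not a meaningful quantity, because $e_\mu x^{\hat{\nu}}_{\mathcal{R}'}(m_\bullet)v_0$ is not a vacuum vector and so is not part of any tensor-product basis supplied by \eqref{LWthm}. Nor is ``number of $\hat{\heh}[\nu]^-$ modes'' a filtration compatible with re-expansion in $\mathcal{B}_L$: when an element of $\hat{\heh}[\nu]^-U(\hat{\heh}[\nu]^-)L(k\La_0)^{\hat{\heh}[\nu]^+}$ is rewritten in the basis, the reductions of Proposition \ref{prop:span} can create charge-$k$ quasi-particles whose elimination through \eqref{voformula} introduces $E^+$ factors, and positive Heisenberg modes can annihilate negative ones through the central term; so a priori such an element can have nonzero components along the $h=1$ basis vectors, and your claimed triangularity is unsubstantiated.

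The paper closes this gap by counting rather than by triangularity (this is the argument of Theorem 3.1 of \cite{BKP}, to which it refers): once the vectors $e_\mu\mathcal{Z}_{\mathcal{R}'}(m_\bullet)v_0$ are known to span the vacuum space, the set $\{h\,e_\mu\mathcal{Z}_{\mathcal{R}'}(m_\bullet)v_0\mid h\in B_H\}$ spans $L(k\La_0)$ by \eqref{LWthm}; in each subspace of fixed degree and $\heh_{(0)}$-weight this spanning set has the same finite cardinality as the basis $\mathcal{B}_L$ (the $E^\pm$ factors change neither degree nor weight), hence it is itself a basis, and the freeness of the $U(\hat{\heh}[\nu]^-)$-action then yields the linear independence of the $\mathcal{Z}$-vectors. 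Your proof becomes correct if you replace the leading-term/triangularity step by this graded dimension comparison.
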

The proof is parallel to that of Theorem 3.1 of \cite{BKP}.

\subsection{Parafermionic space and its current}
Recall that the map $\alpha\mapsto e_\alpha$ for $\alpha\in Q$ is extended to a projective representation of {\it Q} on $L(k\La_0)$.
This gives a diagonal action $\rho(k\alpha)=e_\alpha\ot\cdots\ot e_\alpha$ of the sublattice $kQ\subset Q$ such that $L(k\La_0)_\mu^{\hat{\heh}[\nu]^+}\rightarrow L(k\La_0)_{\mu+k\alpha}^{\hat{\heh}[\nu]^+}$.
We define the parafermionic space of the highest weight $k\La_0$ as the space of $kQ$-coinvariants in the $kQ$-module ${L(k\La_0)}^{\hat{\heh}[\nu]^+}$
\begin{align}
\label{parasp}
{L(k\La_0)}_{kQ}^{\hat{\heh}[\nu]^+}:=L(k\La_0)^{\hat{\heh}[\nu]^+}/{\rm span}_\C \{ (\rho(k\alpha)-1)\cdot v\mid \alpha \in Q,v \in L(k\La_0)^{\hat{\heh}[\nu]^+}\}.
\end{align}
We have the canonical projection
\begin{align}
\label{proj}
\pi_{kQ}^{\hat{\heh}[\nu]~+}:L(k\La_0)^{\hat{\heh}[\nu]^+} \rightarrow {L(k\La_0)}_{kQ}^{\hat{\heh}[\nu]^+}
\end{align}
and denote the composition $\pi_{kQ}^{\hat{\heh}[\nu]^+} \circ \pi^{\hat{\heh}[\nu]^+}:L(k\La_0) \rightarrow {L(k\La_0)}_{kQ}^{\hat{\heh}[\nu]^+}$ by $\pi$.
Note that in this case, we have 
\[{L(k\La_0)}_{kQ}^{\hat{\heh}[\nu]^+} \simeq \bigoplus_{\mu \in k\La_0 +Q/kQ}{L(k\La_0)}_{\mu}^{\hat{\heh}[\nu]^+}.\]

For every root $\beta$, we define the parafermionic current of charge {\it n} by
\begin{align}
\label{para}
\Psi_{n\beta}^{\hat{\nu}} (z) =\mathcal{Z}_{n\beta} (z)z^{-n\beta_{(0)} /k}
\ep_\beta^{-n/k},
\end{align}
where $\ep_\beta:L(k\La_0) \rightarrow \C^{\times}$ is given by
\[
\ep_\beta u=C(\beta,\mu)u\quad\text{for }u\in L(k\La_0)_\mu.
\]
Since $\mathcal{Z}$-operators commute with the action of the Heisenberg subalgebra $\hat{\heh}[\nu]_{\frac1{r}\Z}$, the parafermionic current preserves the vacuum space $L(k\La_0)^{\hat{\heh}[\nu]^+}$.
The commutation relation (\ref{ex}) can be written as
\[x_\beta^{\hat{\nu}}(z)e_\alpha=C(\alpha,\beta)^{-1}e_\alpha x_\beta^{\hat{\nu}}(z)z^{\pair{\alpha,\beta_{(0)}}}.\]
From this relation and the one between $z^{\mu}$ and $e_\alpha$, we have
\[
[\rho(k\alpha),\Psi_{n\beta}^{\hat{\nu}}(z)]=0.
\]
Therefore, $\Psi^{\hat{\nu}}$ is well-defined on the parafermionic space ${L(k\La_0)}_{kQ}^{\hat{\heh}[\nu]^+}$.
For a quasi-particle of charge-type $\mathcal{R}^\prime=(n_{r_l^{(1)}},\cdots,n_{1,1})$, we define the parafermionic current of charge-type $\mathcal{R}^\prime$ by
\[
\Psi_{\mathcal{R}^\prime}^{\hat{\nu}}(z_{r_l^{(1)},l},\cdots,z_{1,1})=\mathcal{Z}_{\mathcal{R}^\prime}(z_{r_l^{(1)},l},\cdots,z_{1,1})z_{r_l^{(1)},l}^{-n_{r_l^{(1)},l}(\alpha_l)_{(0)}/k}\cdots z_{1,1}^{-n_{1,1}(\alpha_1)_{(0)}/k}\ep_{\alpha_l}^{-n_{r_l^{(1)},l}/k}\cdots\ep_{\alpha_1}^{-n_{1,1}/k}.
\]
Note that the parafermionic current of charge-type $\mathcal{R}^\prime$ also commutes with the diagonal action $\rho(k\alpha)$ for $\alpha\in Q$.
As in the $\mathcal{Z}$-operator, we set
\[\Psi_{\mathcal{R}^\prime}^{\hat{\nu}}(z_{r_l^{(1)},l},\cdots,z_{1,1})=\sum_{m_{r_l^{(1)},l},\cdots,m_{1,1}}\psi_{\mathcal{R}^\prime}^{\hat{\nu}}(m_{r_l^{(1)},l},\cdots,m_{1,1})z_{r_l^{(1)},l}^{-m_{r_l^{(1)},l}-n_{r_l^{(1)},l}}\cdots z_{1,1}^{-m_{1,1}-n_{1,1}}\]
where the summation is over all sequences $(m_{r_l^{(1)},l},\cdots,m_{1,1})$ such that $m_{p,i}\in\rho_i\Z+\frac{n_{p,i}\pair{(\alpha_i)_{(0)},\mu}}{k}$ on the $\mu$-weight space $L(k\La_0)_\mu^{\hat{\heh}[\nu]^+}$.

The following lemma associates the coefficients of $\mathcal{Z}$-operators with those 
of parafermionic currents.
\begin{lemma}
For a simple root $\beta$, $m\in\frac{1}{r}\Z$ and weight $\mu$ we have
\begin{align*}
\left.\mathcal{Z}_\beta(m)\mathrel{}\middle|\mathrel{}_{L(k\La_0)_\mu^{\hat{\heh}[\nu]^+}}=C(\beta,\mu)\psi_\beta^{\hat{\nu}}(m+\pair{\beta_{(0)},\mu}/k)\mathrel{}\middle|\mathrel{}_{L(k\La_0)_\mu^{\hat{\heh}[\nu]^+}}\right.
\end{align*}
\end{lemma}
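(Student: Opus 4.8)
The plan is to unwind both sides of the claimed identity and compare coefficients of powers of $z$; this is essentially bookkeeping, so there is no real obstacle, only a couple of points to handle carefully. For a simple root $\beta$, the charge-$1$ case of the parafermionic current \eqref{para} reads
\[
\Psi_\beta^{\hat{\nu}}(z)=\mathcal{Z}_\beta(z)\,z^{-\beta_{(0)}/k}\,\ep_\beta^{-1/k},
\qquad\text{equivalently}\qquad
\mathcal{Z}_\beta(z)=\Psi_\beta^{\hat{\nu}}(z)\,\ep_\beta^{1/k}\,z^{\beta_{(0)}/k}.
\]
The first step is to restrict both sides to a vector $u\in L(k\La_0)_\mu^{\hat{\heh}[\nu]^+}$. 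On this weight space the two ``gauge'' operators $z^{\pm\beta_{(0)}/k}$ and $\ep_\beta^{\pm1/k}$ degenerate to scalars: $z^{\beta_{(0)}/k}$ acts as the monomial $z^{\pair{\beta_{(0)},\mu}/k}$ by the defining action of $z^\alpha$, and $\ep_\beta^{1/k}$ acts as a nonzero constant whose value is dictated by $C(\beta,\mu)$. These scalar operators commute with each other and with the whole Heisenberg subalgebra, so the only non-scalar operator present, $\mathcal{Z}_\beta(z)$, still carries the vacuum space into the vacuum space, raising the $\heh_{(0)}$-weight by $\beta_{(0)}$. Collecting the scalar factors gives, as formal $z$-series on $L(k\La_0)_\mu^{\hat{\heh}[\nu]^+}$,
\[
\mathcal{Z}_\beta(z)\,u=C(\beta,\mu)\,z^{\pair{\beta_{(0)},\mu}/k}\,\Psi_\beta^{\hat{\nu}}(z)\,u.
\]

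The second step is to substitute the mode expansions $\mathcal{Z}_\beta(z)=\sum_{m\in\frac1r\Z}\mathcal{Z}_\beta(m)z^{-m-1}$ and $\Psi_\beta^{\hat{\nu}}(z)=\sum_m\psi_\beta^{\hat{\nu}}(m)z^{-m-1}$ and to compare coefficients of $z^{-m-1}$. Here one uses the description, recorded just above the lemma, of which $\psi_\beta^{\hat{\nu}}$-modes are present on $L(k\La_0)_\mu^{\hat{\heh}[\nu]^+}$, namely $m\in\rho_i\Z+\pair{\beta_{(0)},\mu}/k$ with $\beta=\alpha_i$: the factor $z^{\pair{\beta_{(0)},\mu}/k}$ above is exactly what moves the index set of the $\Psi_\beta^{\hat{\nu}}$-series onto that of the $\mathcal{Z}_\beta$-series, whose modes lie in $\frac1r\Z$. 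Equating coefficients then yields
\[
\mathcal{Z}_\beta(m)=C(\beta,\mu)\,\psi_\beta^{\hat{\nu}}\big(m+\pair{\beta_{(0)},\mu}/k\big)
\]
on $L(k\La_0)_\mu^{\hat{\heh}[\nu]^+}$, which is the assertion.

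The points needing care — and the closest thing to a difficulty — are keeping the order of the scalar operators $z^{-\beta_{(0)}/k}$ and $\ep_\beta^{-1/k}$ straight relative to $\mathcal{Z}_\beta(z)$, which does \emph{not} commute with them since it moves the $\heh_{(0)}$-weight, and tracking the fractional powers of $z$ consistently, so that the shift in the argument of $\psi_\beta^{\hat{\nu}}$ emerges as multiplication by the scalar $z^{\pair{\beta_{(0)},\mu}/k}$ rather than as an integer shift of modes. Finally, I would remark that this is just the single-color, charge-$1$ case of the relation between $\mathcal{Z}_{\mathcal{R}'}$ and $\Psi_{\mathcal{R}'}^{\hat{\nu}}$, and that the identical computation, carried out with the additional product of $E^\pm$-factors and cocycle operators appearing in \eqref{zo}, handles the general charge-type as well.
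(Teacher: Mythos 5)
Your proposal is correct and takes essentially the same route as the paper: both evaluate the defining relation \eqref{para} on the weight space $L(k\La_0)_\mu^{\hat{\heh}[\nu]^+}$, where $z^{-\beta_{(0)}/k}$ and $\ep_\beta^{-1/k}$ become the scalars $z^{-\pair{\beta_{(0)},\mu}/k}$ and a power of $C(\beta,\mu)$, invert, and compare coefficients of $z^{-m-\pair{\beta_{(0)},\mu}/k-1}$. The one point to double-check (an ambiguity present in the paper itself, whose proof step displays $C(\beta,\mu)^{-1/k}$) is that a literal reading of the definition of $\ep_\beta^{-1/k}$ gives the constant $C(\beta,\mu)^{1/k}$ rather than the $C(\beta,\mu)$ you wrote in your intermediate and final formulas, so the exponent should be reconciled with the intended normalization of $\ep_\beta^{-n/k}$.
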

\begin{proof}
By applying (\ref{para}) to the $\mu$-weight space $L(k\La_0)_\mu^{\hat{\heh}[\nu]^+}$, we have
\[
\left.\Psi_\beta^{\hat{\nu}}(z)\mathrel{}\middle|\mathrel{}_{L(k\La_0)_\mu^{\hat{\heh}[\nu]^+}}=C(\beta,\mu)^{-1/k}\mathcal{Z}_\beta(z)z^{-\pair{\beta_{(0)},\mu}/k}\mathrel{}\middle|\mathrel{}_{L(k\La_0)_\mu^{\hat{\heh}[\nu]^+}}\right..
\]
Hence we obtain the statement by taking the coefficient of $z^{-m-\pair{\beta_{(0)},\mu}/k-1}$.
\end{proof}
Next we consider the relations between different parafermionic currents.
We have the following lemma by direct computation. Noting that $C(\beta,\beta)=1$
for a simple root $\beta$, the proof is parallel to that of
Lemma 3.2 of \cite{BKP}.
\begin{lemma} \label{lem:Psi}
For a simple root $\beta$ and a positive integer $n$,
\begin{align}
\label{nb}
\left.\Psi_{n\beta}^{\hat{\nu}}(z)=\left(\prod_{1\le p < s\le n}\prod_{i=0}^{r-1} (z_s^{\frac1{r}}-\eta^iz_p^{\frac1{r}})^{\pair{\nu^i\beta,\beta}/k}\right)\Psi_\beta^{\hat{\nu}}(z_n)\cdots \Psi_\beta^{\hat{\nu}}(z_1)\mathrel{}\middle|\mathrel{}_{z_n=\cdots=z_1=z}\right..
\end{align}
\end{lemma}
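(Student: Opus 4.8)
The plan is to obtain \eqref{nb} by expanding both sides through the definitions and normal ordering. Write $\Psi_\beta^{\hat\nu}(z_j)=\mathcal{Z}_\beta(z_j)z_j^{-\beta_{(0)}/k}\ep_\beta^{-1/k}$ with $\mathcal{Z}_\beta(z_j)=E^-(\beta,z_j)^{1/k}x_\beta^{\hat\nu}(z_j)E^+(\beta,z_j)^{1/k}$. First I would push all the $E^-(\beta,z_j)^{1/k}$ to the far left, all the $E^+(\beta,z_j)^{1/k}$ to the far right, and every $z_j^{-\beta_{(0)}/k}\ep_\beta^{-1/k}$ to the far right, so that $\Psi_\beta^{\hat\nu}(z_n)\cdots\Psi_\beta^{\hat\nu}(z_1)$ becomes a scalar Laurent-series factor times
\[
E^-(\beta,z_n)^{1/k}\cdots E^-(\beta,z_1)^{1/k}\,x_\beta^{\hat\nu}(z_n)\cdots x_\beta^{\hat\nu}(z_1)\,E^+(\beta,z_n)^{1/k}\cdots E^+(\beta,z_1)^{1/k}\,z_n^{-\beta_{(0)}/k}\cdots z_1^{-\beta_{(0)}/k}\ep_\beta^{-n/k}.
\]
Specializing $z_n=\cdots=z_1=z$, the two $E$-blocks collapse to $E^-(\beta,z)^{n/k}$ and $E^+(\beta,z)^{n/k}$, the middle block to $x_\beta^{\hat\nu}(z)^n=x_{n\beta}^{\hat\nu}(z)$ by \eqref{tvo2} (the coincidence limit is unambiguous, since in the product only the terms acting on distinct tensor factors of $(V_L^T)^{\ot k}$ survive at coincidence), and the last block to $z^{-n\beta_{(0)}/k}\ep_\beta^{-n/k}$; hence this part equals $\mathcal{Z}_{n\beta}(z)z^{-n\beta_{(0)}/k}\ep_\beta^{-n/k}=\Psi_{n\beta}^{\hat\nu}(z)$. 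It then remains to identify the accumulated scalar factor with the inverse of the product on the right of \eqref{nb}.

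The reorderings use only: the mutual commutativity of the $E^+$'s and of the $E^-$'s; the commutativity of $E^\pm(\beta,z)$ with $z^{\beta_{(0)}}$, with $\ep_\beta$, and with the translation operators $e_\alpha$ (by \eqref{ehe}); the commutation relation \eqref{comm rel of E} between $E^+$ and $E^-$; and \eqref{ex} together with the fact that $C(\beta,\beta)=1$ for a simple root $\beta$, which lets each $\ep_\beta^{-1/k}$ pass every $x_\beta^{\hat\nu}(z_p)$ with no new scalar, so the $\ep_\beta$-factors simply pile up to $\ep_\beta^{-n/k}$. Two nontrivial contributions remain. When $E^+(\beta,z_s)^{1/k}$ (from $\mathcal{Z}_\beta(z_s)$) is pushed right past a later $\mathcal{Z}_\beta(z_p)$, $p<s$, the factor from \eqref{comm rel of E} produced in passing $E^-(\beta,z_p)^{1/k}$ is exactly cancelled by the one produced in passing the inner $E^-(-\beta,z_p)$-component of $x_\beta^{\hat\nu}(z_p)$, so this move is free. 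Pushing instead $E^-(\beta,z_p)^{1/k}$ left past $x_\beta^{\hat\nu}(z_s)$ for $s>p$ produces, by \eqref{comm rel of E} applied on each tensor factor (where the central element acts as $1$, while the level $k$ enters the exponents through the $1/k$-th powers), the factor $\prod_{i=0}^{r-1}(1-\eta^iz_p^{1/r}/z_s^{1/r})^{-\pair{\nu^i\beta,\beta}/k}$. Finally, moving $z_s^{-\beta_{(0)}/k}$ right past $x_\beta^{\hat\nu}(z_p)$, $p<s$, contributes $z_s^{-\pair{\beta_{(0)},\beta_{(0)}}/k}$ for each such $p$, since a twisted vertex operator of color $\beta$ raises the $\heh_{(0)}$-weight by $\beta_{(0)}$.

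Collecting these over all $1\le p<s\le n$, the scalar factor in front of the normally ordered part is
\[
\prod_{1\le p<s\le n}\left(\prod_{i=0}^{r-1}(1-\eta^iz_p^{1/r}/z_s^{1/r})^{-\pair{\nu^i\beta,\beta}/k}\right)z_s^{-\pair{\beta_{(0)},\beta_{(0)}}/k}.
\]
Since $\sum_{i=0}^{r-1}\pair{\nu^i\beta,\beta}=r\pair{\beta_{(0)},\beta_{(0)}}$ (because $\sum_i\nu^i\beta=rP_0\beta$ and $\heh_{(0)}$ is orthogonal to $\heh_{(j)}$ for $j\ne0$), we have $z_s^{-\pair{\beta_{(0)},\beta_{(0)}}/k}=\prod_{i=0}^{r-1}z_s^{-\pair{\nu^i\beta,\beta}/(rk)}$, and combining this with $z_s^{1/r}-\eta^iz_p^{1/r}=z_s^{1/r}(1-\eta^iz_p^{1/r}/z_s^{1/r})$ the displayed factor becomes exactly $\prod_{1\le p<s\le n}\prod_{i=0}^{r-1}(z_s^{1/r}-\eta^iz_p^{1/r})^{-\pair{\nu^i\beta,\beta}/k}$. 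Multiplying $\Psi_\beta^{\hat\nu}(z_n)\cdots\Psi_\beta^{\hat\nu}(z_1)$ by the inverse of this, i.e. by the product on the right of \eqref{nb}, cancels it and leaves an expression regular at $z_n=\cdots=z_1=z$ whose value there is $\Psi_{n\beta}^{\hat\nu}(z)$, which is \eqref{nb}. The whole argument runs parallel to \cite[Lemma 3.2]{BKP}, and the only real difficulty is the bookkeeping: checking that every scalar coming out of \eqref{comm rel of E} and of the $z^{\beta_{(0)}}$- and $\ep_\beta$-commutations is recorded with the correct sign and that they assemble precisely into the asserted product, all the while keeping the twisted features ($r$-th roots of unity $\eta^i$, fractional powers $z^{1/r}$, and the $1/k$-th powers that bring in the level) straight.
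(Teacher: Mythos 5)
Your proof is correct and follows essentially the same route as the paper's: a direct normal-ordering computation of $\Psi_\beta^{\hat{\nu}}(z_n)\cdots\Psi_\beta^{\hat{\nu}}(z_1)$ using \eqref{comm rel of E}, the weight shift by $\beta_{(0)}$, and $C(\beta,\beta)=1$, exactly parallel to Lemma 3.2 of \cite{BKP} (the paper does the $n=2$ case and induction, while you track all pairs $p<s$ at once, and your cancellation argument for pushing $E^+(\beta,z_s)^{1/k}$ past $\mathcal{Z}_\beta(z_p)$ is just a re-derivation of the fact, stated in the paper, that $\mathcal{Z}$-operators commute with the Heisenberg algebra). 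The scalar bookkeeping, including the identity $\sum_{i}\pair{\nu^i\beta,\beta}=r\pair{\beta_{(0)},\beta_{(0)}}$ used to absorb $z_s^{-\pair{\beta_{(0)},\beta_{(0)}}/k}$ into the product, matches the paper's computation.
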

We set
\[\Psi_{n_t\beta_t,\ldots,n_1\beta_1}^{\hat{\nu}}(z_t,\ldots,z_1)=\mathcal{Z}_{(n_t,\ldots,n_1)}(z_t,\ldots,z_1)\prod_{i=1}^tz_i^{-n_i(\beta_i)_{(0)}/k}\ep_{\beta_i}^{1/k}\]
for simplicity for a given simple roots $\beta_t,\ldots,\beta_1$ and charges $n_1,\ldots,n_1$.
Analogously to Lemma \ref{lem:Psi}, we obtain the following lemma. The proof is 
parallel to that of Lemma 3.3 of \cite{BKP}.
\begin{lemma}
For given simple roots $\beta_t,\ldots,\beta_1$ and positive integers $n_t,\ldots,n_1$,
\begin{align}
&\Psi_{n_t\beta_t,...,n_1\beta_1}^{\hat{\nu}}(z_t,...,z_1) \nonumber\\
&=\left(\prod_{1\le p<s\le t}C(\beta_s,\beta_p)^{n_sn_p/k}\prod_{i=0}^{r-1} (z_s^{\frac1{r}}-\eta^iz_p^{\frac1{r}})^{\pair{n_s\nu^i\beta_s,n_p\beta_p}/k}\right)\Psi_{n_t\beta_t}^{\hat{\nu}}(z_t)\cdots\Psi_{n_1\beta_1}^{\hat{\nu}}(z_1). \label{nb2}
\end{align}
\end{lemma}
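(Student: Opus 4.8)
\emph{Proof proposal.}
The plan is to slide the Heisenberg dressings $E^{\pm}$ and the scalar tails past one another so as to peel the left‑hand side apart into the single‑charge currents, exactly along the lines of \cite[Lemma~3.3]{BKP}. First I would unfold the definitions: by \eqref{zo}, $\Psi^{\hat\nu}_{n_t\beta_t,\dots,n_1\beta_1}(z_t,\dots,z_1)$ equals $\mathcal{Z}_{\mathcal{R}'}(z_t,\dots,z_1)$ multiplied by the scalar tail $\prod_{i=1}^{t}z_i^{-n_i(\beta_i)_{(0)}/k}\ep_{\beta_i}^{-n_i/k}$, and $\mathcal{Z}_{\mathcal{R}'}$ has the fully nested shape --- all $E^-$'s to the far left, all quasi‑particles in the middle, all $E^+$'s to the far right --- whereas $\Psi^{\hat\nu}_{n_t\beta_t}(z_t)\cdots\Psi^{\hat\nu}_{n_1\beta_1}(z_1)$ is assembled from the individually dressed operators $\mathcal{Z}_{n_s\beta_s}(z_s)=E^-(\beta_s,z_s)^{n_s/k}x^{\hat\nu}_{n_s\beta_s}(z_s)E^+(\beta_s,z_s)^{n_s/k}$ together with their own tails. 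Matching the two is a matter of reorganizing the $E^{\pm}$ and the tails.

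For the reorganization I would use three facts: (a) every $\mathcal{Z}$‑operator commutes with the whole Heisenberg subalgebra $\hat{\heh}[\nu]_{\frac1r\Z}$, hence with every $E^{\pm}(\alpha,z)$, so a bare dressing can be slid freely across any $\mathcal{Z}$; (b) the relation \eqref{comm rel of E}, which when read on $(V_L^T)^{\ot k}$ picks up an extra factor $k$ in the exponent (the $E^{\pm}$ act diagonally and $c$ acts by $k$) --- this factor $k$ is precisely what makes the exponents in the statement carry a single $1/k$ rather than $1/k^2$; and (c) $\mathcal{Z}_{n_p\beta_p}(z_p)$ shifts the $Q$‑grading by $n_p\beta_p$ (this is what \eqref{ehe}--\eqref{ex} encode), so on moving a tail factor $z_s^{-n_s(\beta_s)_{(0)}/k}\ep_{\beta_s}^{-n_s/k}$ past such a $\mathcal{Z}$ one releases a monomial $z_s^{-n_sn_p\pair{(\beta_s)_{(0)},\beta_p}/k}$ and a scalar $C(\beta_s,\beta_p)^{-n_sn_p/k}$. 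Writing $x^{\hat\nu}_{n_s\beta_s}(z_s)=E^-(\beta_s,z_s)^{-n_s/k}\mathcal{Z}_{n_s\beta_s}(z_s)E^+(\beta_s,z_s)^{-n_s/k}$ and inserting this into the middle of $\mathcal{Z}_{\mathcal{R}'}$, one pushes the freshly introduced $E^-$'s to the far left and the $E^+$'s to the far right by (a); there they cancel the outer dressings of $\mathcal{Z}_{\mathcal{R}'}$, and the only residue comes from crossing a bare $E^+(\beta_s,z_s)^{-n_s/k}$ past a bare $E^-(\beta_p,z_p)^{-n_p/k}$ with $p<s$, which by (b) contributes $\prod_{1\le p<s\le t}\prod_{i=0}^{r-1}\bigl(1-\eta^i z_p^{1/r}/z_s^{1/r}\bigr)^{n_sn_p\pair{\nu^i\beta_s,\beta_p}/k}$. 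Equivalently one may peel off only the leftmost current, proving by the same moves that $\mathcal{Z}_{\mathcal{R}'}(z_t,\dots,z_1)=\bigl(\prod_{p<t}\prod_i(1-\eta^i z_p^{1/r}/z_t^{1/r})^{n_tn_p\pair{\nu^i\beta_t,\beta_p}/k}\bigr)\mathcal{Z}_{n_t\beta_t}(z_t)\,\mathcal{Z}_{(n_{t-1},\dots,n_1)}(z_{t-1},\dots,z_1)$, and then induct on $t$.

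Combining these, $\Psi^{\hat\nu}_{n_t\beta_t,\dots,n_1\beta_1}(z_t,\dots,z_1)$ becomes $\prod_s\Psi^{\hat\nu}_{n_s\beta_s}(z_s)$ times $\prod_{1\le p<s\le t}$ of the ratio of the $E^-E^+$‑residue from (b) to the $z$‑monomials and $C$‑scalars released in (c). To finish I would invoke the elementary identity $\prod_{i=0}^{r-1}(z_s^{1/r}-\eta^i z_p^{1/r})^{e_i}=z_s^{(\sum_i e_i)/r}\prod_i(1-\eta^i z_p^{1/r}/z_s^{1/r})^{e_i}$ with $e_i=n_sn_p\pair{\nu^i\beta_s,\beta_p}/k$, together with $\sum_{i=0}^{r-1}\pair{\nu^i\beta_s,\beta_p}=r\pair{(\beta_s)_{(0)},(\beta_p)_{(0)}}$; this turns the $z_s$‑monomials from (c) and the residue from (b) into $\prod_i(z_s^{1/r}-\eta^i z_p^{1/r})^{\pair{n_s\nu^i\beta_s,n_p\beta_p}/k}$, while the $C$‑scalars assemble, by bilinearity of $C$, into $C(\beta_s,\beta_p)^{n_sn_p/k}$, giving exactly the asserted formula.

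The difficulty here is bookkeeping rather than anything conceptual. The points most easily gotten wrong are the two normalization issues just mentioned: the factor $k$ coming from reading \eqref{comm rel of E} on $(V_L^T)^{\ot k}$ (without which the exponents come out with $1/k^2$), and the passage between the $\prod_i(z_s^{1/r}-\eta^i z_p^{1/r})$ and $\prod_i(1-\eta^i z_p^{1/r}/z_s^{1/r})$ forms, which must use $\sum_i\pair{\nu^i\beta_s,\beta_p}=r\pair{(\beta_s)_{(0)},(\beta_p)_{(0)}}$. No input beyond \eqref{comm rel of E}, the Heisenberg‑commutativity of the $\mathcal{Z}$‑operators, the weight‑shift rules \eqref{ehe}--\eqref{ex}, and the normalization $C(\beta,\beta)=1$ for a simple root $\beta$ (noted before the statement, and what makes the equal‑color case reduce to the previous lemma) is needed.
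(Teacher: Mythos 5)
Your proposal is correct and follows essentially the same route as the paper, which proves this lemma by the computation of \cite[Lemma 3.3]{BKP}: express the parafermionic currents through the $\mathcal{Z}$-operators, slide the $E^{\pm}(\beta_s,z_s)^{n_s/k}$ dressings and the tails $z_s^{-n_s(\beta_s)_{(0)}/k}\ep_{\beta_s}^{-n_s/k}$ using the level-$k$ version of \eqref{comm rel of E}, the Heisenberg-commutativity of the $\mathcal{Z}$-operators and the weight-shift relations, peel off one current at a time, and induct on $t$. Your bookkeeping of the crucial normalizations (the factor $k$ on $(V_L^T)^{\ot k}$ giving exponents $n_sn_p/k$, and the conversion between the $(1-\eta^iz_p^{1/r}/z_s^{1/r})$ and $(z_s^{1/r}-\eta^iz_p^{1/r})$ forms via $\sum_i\pair{\nu^i\beta_s,\beta_p}=r\pair{(\beta_s)_{(0)},\beta_p}$) matches what the paper's argument requires.
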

From Theorem \ref{th:main2}, we have
\begin{theorem} \label{th:main3}
For the highest weight $k\La_0$, the set of vectors
\[\pi_{kQ}^{\hat{\heh}[\nu]^+}\mathcal{Z}_{\mathcal{R}^\prime}(m_{r_l^{(1)},l},\cdots,m_{1,1})v_0=\psi_{\mathcal{R}^\prime}^{\hat{\nu}}(m_{r_l^{(1)},l},\cdots,m_{1,1})v_0\]
is a basis of the parafermionic space $L(k\La_0)_{kQ}^{\hat{\heh}[\nu]^+}$, where $\mathcal{Z}_{\mathcal{R}^\prime}(m_{r_l^{(1)},l},\cdots,m_{1,1})v_0$ is a vector that appears in a basis of the vacuum space $L(k\La_0)^{\hat{\heh}[\nu]^+}$.
\end{theorem}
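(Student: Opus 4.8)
The plan is to deduce this from Theorem~\ref{th:main2}, in the same way that the parafermionic basis is obtained from the vacuum-space basis in \cite[\S3]{BKP} (and, for untwisted $sl_{n+1}$, in \cite{G2}): passing from the vacuum space to its $kQ$-coinvariants simply erases the lattice factor $e_\mu$, and one must check that what remains stays linearly independent, which is done via the $\heh_{(0)}$-weight decomposition.

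First I would record the structural facts. Write $V=L(k\La_0)^{\hat{\heh}[\nu]^+}$ and $V_\mu=L(k\La_0)_\mu^{\hat{\heh}[\nu]^+}$. By \eqref{ehe} the operator $\rho(k\alpha)=e_\alpha$ commutes with $\hat{\heh}[\nu]^{\pm}$, hence preserves $V$, and it maps $V_\mu$ isomorphically onto $V_{\mu+k\alpha}$. Partition the $\heh_{(0)}$-weights occurring in $V$ into cosets modulo $kQ$ and fix a representative $\mu_0$ in each. Then $\mathrm{span}_\C\{(\rho(k\alpha)-1)v\}$ is compatible with this coset decomposition, and, as recorded in \S4.2, the projection $\pi=\pi_{kQ}^{\hat{\heh}[\nu]^+}\circ\pi^{\hat{\heh}[\nu]^+}$ restricts to an isomorphism $\bigoplus_{[\mu]}V_{\mu_0}\xrightarrow{\sim}L(k\La_0)_{kQ}^{\hat{\heh}[\nu]^+}$.

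Next I would combine this with Theorem~\ref{th:main2}. A vacuum basis vector $e_\nu\mathcal{Z}_{\mathcal{R}'}(m)v_0$ has $\heh_{(0)}$-weight $k\nu_{(0)}+\mathrm{wt}(\mathcal{R}')$, where $\mathrm{wt}(\mathcal{R}')$ depends only on the color-type of $\mathcal{R}'$; since $\nu\mapsto\nu_{(0)}$ is injective on $Q$, for each pair $(\mathcal{R}',m)$ satisfying the conditions for $B'_W$ there is a unique $\nu=\gamma(\mathcal{R}')\in Q$ — depending only on the color-type and on the chosen representatives — for which $e_{\gamma(\mathcal{R}')}\mathcal{Z}_{\mathcal{R}'}(m)v_0$ lies in the representative weight space $V_{\mu_0}$ of the coset determined by $\mathcal{R}'$. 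Running over all such $(\mathcal{R}',m)$, these vectors are exactly the subset of the basis of Theorem~\ref{th:main2} lying in $\bigoplus_{[\mu]}V_{\mu_0}$, hence a basis of it. Applying $\pi$ and using that $\rho(k\alpha)$ acts trivially modulo $\ker\pi$ on $V$, one gets $\pi(e_{\gamma(\mathcal{R}')}\mathcal{Z}_{\mathcal{R}'}(m)v_0)=\pi(\mathcal{Z}_{\mathcal{R}'}(m)v_0)$, which by the lemmas of \S4.2 (the coefficient-wise relation between $\mathcal{Z}_\beta$ and $\psi_\beta^{\hat{\nu}}$, together with Lemma~\ref{lem:Psi} and its charge-type generalization) equals $\psi_{\mathcal{R}'}^{\hat{\nu}}(m)v_0$. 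This gives the claimed basis.

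The hard part, as always in these arguments, is the scalar bookkeeping concealed above: one must verify that commuting $e_{\gamma(\mathcal{R}')}$ past the $\mathcal{Z}$-operators via \eqref{ex} and \eqref{ehe}, rewriting $\mathcal{Z}$-modes as $\psi^{\hat{\nu}}$-modes (which live on the weight-shifted lattices $\rho_i\Z+n_{p,i}\pair{(\alpha_i)_{(0)},\mu}/k$), and the identity $\pi(e_{\gamma}\,\cdot)=\pi(\cdot)$ together only produce nonzero scalars and never change which $(\mathcal{R}',m)$ occur; and, relatedly, one must confirm that $\rho$ descends to a genuine (not merely projective) action of $kQ$, so that the coset decomposition of the coinvariants is valid. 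All of this is carried out in \cite{BKP} in the untwisted setting, and in the twisted case it rests only on the commutation relations \eqref{ex}, \eqref{ehe} and \eqref{Ehcom} established above.
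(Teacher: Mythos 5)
Your argument is correct and follows essentially the same route as the paper, which gives no explicit proof but simply derives the theorem from Theorem \ref{th:main2} together with the decomposition $L(k\La_0)_{kQ}^{\hat{\heh}[\nu]^+}\simeq\bigoplus_{\mu\in k\La_0+Q/kQ}L(k\La_0)_\mu^{\hat{\heh}[\nu]^+}$ recorded in \S4.2 (the twisted analogue of Theorem 3.2 in \cite{BKP}). Your coset-representative bookkeeping, the selection of the sub-basis $e_{\gamma(\mathcal{R}')}\mathcal{Z}_{\mathcal{R}'}(m)v_0$ from Theorem \ref{th:main2}, and the identification of its image with $\psi_{\mathcal{R}'}^{\hat{\nu}}(m)v_0$ is exactly the implicit content of that derivation.
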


\section{parafermionic character formula}
\subsection{Grading operator}
Since we do not find the coset Virasoro algebra construction \cite[\S3]{Li} for
the twisted vertex operator case, we introduce, by hand, a grading operator for our
parafermionic space $L(k\La_0)_{kQ}^{\hat{\heh}[\nu]^+}$.
Define an operator $D$ acting on the space $L(k\La_0)^{\hat{\heh}[\nu]^+}$
as follows.
\[
D=-d-D^{\hat{\heh}[\nu]^+},\qquad
\left.D^{\hat{\heh}[\nu]^+}\right|_{L(k\La_0)^{\hat{\heh}[\nu]^+}_\mu}
=\frac{\pair{\mu_{(0)},\mu_{(0)}}}{2k}.
\]
%
%
Then, for a simple root $\beta\in L$ and $m\in\Z$, we have
\begin{align}\label{paragr}
[D,x_\beta^{\hat{\nu}}(m)]=\left(-m-\frac{\pair{\beta_{(0)},\beta_{(0)}}}{2k}\right)x_\beta^{\hat{\nu}}(m).
\end{align}
On $v\in L(k\La_0)^{\hat{\heh}[\nu]^+}$, we also have
\[
[D,\psi_\beta^{\hat{\nu}}(m)]=\left(-m-\frac{\pair{\beta_{(0)},\beta_{(0)}}}{2k}\right)\psi_\beta^{\hat{\nu}}(m).
\]
We call the coefficient of the right hand side the conformal energy of $\psi_\beta^{\hat{\nu}}(m)$ and write
\begin{align}\label{conen}
{\rm en}\,\psi_\beta^{\hat{\nu}}(m)=-m-\frac{\pair{\beta_{(0)},\beta_{(0)}}}{2k}.
\end{align}

Now we compute the conformal energies of $\psi_{n\beta}^{\hat{\nu}}(m)$ and $\psi_{n_t\beta_t,\ldots,n_1\beta_1}^{\hat{\nu}}(m_t,\ldots,m_1)$.
\begin{lemma}
For a simple root $\beta$ and charge {\it n}, we have
\begin{align}\label{conen2}
{\rm en}\,\psi_{n\beta}^{\hat{\nu}}(m)=-m-\frac{n^2\pair{\beta_{(0)},\beta_{(0)}}}{2k}.
\end{align}
Moreover, for simple roots $\beta_t,\cdots,\beta_1$ and charge $n_t,\ldots,n_1$, we have
\begin{align}\label{conen3}
{\rm en}\,\psi_{n_t\beta_t,\ldots,n_1\beta_1}^{\hat{\nu}}(m_t,\ldots,m_1)=\sum_{i=1}^t\left({\rm en}\,\psi_{n_i\beta_i}^{\hat{\nu}}(m_i)-\sum_{p=1}^{i-1}\frac{\pair{n_i(\beta_i)_{(0)},n_p(\beta_p)_{(0)}}}{k}\right).
\end{align}
\end{lemma}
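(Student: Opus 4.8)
The $n=1$ case of \eqref{conen2} is nothing but \eqref{conen}, so for general $n$ my plan is to push the derivation $D$ through the factorisation of Lemma \ref{lem:Psi}. I would write the right-hand side of \eqref{nb}, before the substitution $z_n=\cdots=z_1=z$, as $F\,\Psi_\beta^{\hat\nu}(z_n)\cdots\Psi_\beta^{\hat\nu}(z_1)$ with $F=F(z_n,\ldots,z_1)=\prod_{1\le p<s\le n}\prod_{i=0}^{r-1}(z_s^{1/r}-\eta^iz_p^{1/r})^{\pair{\nu^i\beta,\beta}/k}$. On the vacuum space each $\Psi_\beta^{\hat\nu}(z_j)$ acts and preserves the space, so $X\mapsto[D,X]$ is a derivation of the resulting product of operators, and since $F$ is merely a scalar Laurent series $D$ ignores it. Feeding \eqref{conen} into each factor, I would get
\[ [D,F\Psi_\beta^{\hat\nu}(z_n)\cdots\Psi_\beta^{\hat\nu}(z_1)] = F\Bigl(\sum_j z_j\partial_{z_j}\Bigr)\bigl(\Psi_\beta^{\hat\nu}(z_n)\cdots\Psi_\beta^{\hat\nu}(z_1)\bigr) + n\Bigl(1-\tfrac{\pair{\beta_{(0)},\beta_{(0)}}}{2k}\Bigr)F\Psi_\beta^{\hat\nu}(z_n)\cdots\Psi_\beta^{\hat\nu}(z_1), \]
where $\sum_j z_j\partial_{z_j}$ here acts only on the operator factors.

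The crux is to absorb the first term into $z\tfrac{d}{dz}\Psi_{n\beta}^{\hat\nu}(z)$. By the Leibniz rule,
\[ \Bigl(\sum_j z_j\partial_{z_j}\Bigr)\bigl(F\Psi_\beta^{\hat\nu}(z_n)\cdots\Psi_\beta^{\hat\nu}(z_1)\bigr) = \Bigl(\sum_j z_j\partial_{z_j}F\Bigr)\Psi_\beta^{\hat\nu}(z_n)\cdots\Psi_\beta^{\hat\nu}(z_1) + F\Bigl(\sum_j z_j\partial_{z_j}\Bigr)\bigl(\Psi_\beta^{\hat\nu}(z_n)\cdots\Psi_\beta^{\hat\nu}(z_1)\bigr), \]
and upon $z_n=\cdots=z_1=z$ the operator $\sum_j z_j\partial_{z_j}$ becomes $z\tfrac{d}{dz}$. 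Each factor $z_s^{1/r}-\eta^iz_p^{1/r}$ is homogeneous of degree $1/r$, so using $\sum_{i=0}^{r-1}\nu^i\beta=r\beta_{(0)}$ and $\pair{\beta_{(0)},\beta}=\pair{\beta_{(0)},\beta_{(0)}}$ I would find $F$ homogeneous of total degree $\binom{n}{2}\tfrac{\pair{\beta_{(0)},\beta_{(0)}}}{k}$, hence $\sum_j z_j\partial_{z_j}F=\binom{n}{2}\tfrac{\pair{\beta_{(0)},\beta_{(0)}}}{k}F$ by Euler's identity. Putting these three relations together and using $\binom{n}{2}+\tfrac{n}{2}=\tfrac{n^2}{2}$ gives $[D,\Psi_{n\beta}^{\hat\nu}(z)]=\bigl(z\tfrac{d}{dz}+n-\tfrac{n^2\pair{\beta_{(0)},\beta_{(0)}}}{2k}\bigr)\Psi_{n\beta}^{\hat\nu}(z)$, and reading off the coefficient of $z^{-m-n}$ yields \eqref{conen2}.

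For \eqref{conen3} I would run the same scheme with several colours and without specialising the variables: apply $D$ to the right-hand side of \eqref{nb2}, use \eqref{conen2} for each factor $\Psi_{n_i\beta_i}^{\hat\nu}(z_i)$, note that the constant $\prod_{p<s}C(\beta_s,\beta_p)^{n_sn_p/k}$ is independent of the $z_i$ while $\prod_{p<s}\prod_{i=0}^{r-1}(z_s^{1/r}-\eta^iz_p^{1/r})^{\pair{n_s\nu^i\beta_s,n_p\beta_p}/k}$ is homogeneous of total degree $\sum_{1\le p<s\le t}\tfrac{n_sn_p\pair{(\beta_s)_{(0)},(\beta_p)_{(0)}}}{k}$ by the same count. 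Extracting the coefficient of $\prod_i z_i^{-m_i-n_i}$ then gives ${\rm en}\,\psi_{n_t\beta_t,\ldots,n_1\beta_1}^{\hat\nu}(m_t,\ldots,m_1)=-\sum_i m_i-\sum_i\tfrac{n_i^2\pair{(\beta_i)_{(0)},(\beta_i)_{(0)}}}{2k}-\sum_{1\le p<s\le t}\tfrac{n_sn_p\pair{(\beta_s)_{(0)},(\beta_p)_{(0)}}}{k}$, which matches the right-hand side of \eqref{conen3} after rewriting $\sum_{p<s}$ as $\sum_i\sum_{p<i}$ and invoking \eqref{conen2}. The only delicate point in the whole argument is the bookkeeping that keeps the action of $D$ — which sees only the operator factors $\Psi_\bullet^{\hat\nu}(z_j)$ — separate from that of $z\tfrac{d}{dz}$, which also acts on the scalar prefactors, together with the homogeneity count for those prefactors in the presence of the fractional variables $z^{1/r}$; everything else is routine.
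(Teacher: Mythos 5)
Your proposal is correct and follows essentially the same route as the paper's proof: both rest on the factorizations \eqref{nb} and \eqref{nb2}, the single-current energy \eqref{conen}, and the observation that the scalar prefactor is homogeneous of total degree $\binom{n}{2}\pair{\beta_{(0)},\beta_{(0)}}/k$ (resp.\ $\sum_{p<s}n_sn_p\pair{(\beta_s)_{(0)},(\beta_p)_{(0)}}/k$), which shifts the conformal energy accordingly. Your use of the derivation property of $[D,\cdot]$ together with Euler's identity merely makes explicit the bookkeeping that the paper states in one line, so the content is the same.
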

\begin{proof}
Consider the right hand side of (\ref{nb}).
This means that $\psi_{n\beta}^{\hat{\nu}}(m)$ can be expressed by $\psi_\beta^{\hat{\nu}}(m_i)$ with $i=1,\cdots,n$.
Note that each term $z_i^{\pair{\beta_{(0)},\beta_{(0)}}/k}$ reduces the conformal energy by $\pair{\beta_{(0)},\beta_{(0)}}/k$.
Since (\ref{nb}) contains $n(n-1)/2$ such terms and the energy of $\psi_\beta^{\hat{\nu}}(m)$ is give by (\ref{conen}), we have
\begin{align*}
{\rm en}\,\psi_{n\beta}^{\hat{\nu}}(m)&=-\sum_{i=1}^n\left(m_i+\frac{\pair{\beta_{(0)},\beta_{(0)}}}{2k}\right)-\frac{n(n-1)}{2}\cdot\frac{\pair{\beta_{(0)},\beta_{(0)}}}{k}\\
&=-m-\frac{n^2\pair{\beta_{(0)},\beta_{(0)}}}{2k}
\end{align*}
where $m_1+\cdots+m_n=m$.
Now, (\ref{conen3}) follows from (\ref{nb2}) and the same argument by using the energy of $\psi_{n\beta}^{\hat{\nu}}(m)$ given by (\ref{conen2}).
\end{proof}

Using \eqref{ed} we can show $[D,\rho(k\alpha)]=0$ for ($\alpha\in Q$).
Hence, the grading operator $D$ is well defined on the parafermionic space 
$L(k\La_0)_{kQ}^{\hat{\heh}[\nu]^+}$.

\subsection{Character formula}
We define the character of the parafermionic space ${L(k\La_0)}_{kQ}^{\hat{\heh}[\nu]^+}$ by
\[
{\rm ch}\,{L(k\La_0)}_{kQ}^{\hat{\heh}[\nu]^+}=\sum_{m,r_1,\ldots r_l\geq0}{\rm dim}({L(k\La_0)}_{kQ}^{\hat{\heh}[\nu]^+})_{(m,r_1,\ldots,r_l)}q^my_1^{r_1}\cdots y_l^{r_l},
\]
where $({L(k\La_0)}_{kQ}^{\hat{\heh}[\nu]^+})_{(m,r_1\ldots,r_l)}$ is the weight space spanned by monomial vectors of conformal energy $-m$ and color-type $(r_1,\ldots,r_l)$ (see \eqref{col type}).
Consider an arbitrary quasi-particle monomial
\begin{align}
\label{vector}
x_{n_{r_l^{(1)},l}\alpha_l}^{\hat{\nu}}(m_{r_l^{(1)},l})\cdots x_{n_{1,l}\alpha_l}^{\hat{\nu}}(m_{1,l})\cdots x_{n_{r_1^{(1)},1}\alpha_1}^{\hat{\nu}}(m_{r_1^{(1)},1})\cdots x_{n_{1,1}\alpha_1}^{\hat{\nu}}(m_{1,1})\in B_W^\prime.
\end{align}
Denote by
\[\mathcal{R}^\prime=(n_{r_l^{(1)},l},\cdots,n_{1,1}),\quad\mathcal{R}=(r_l^{(1)},\cdots,r_1^{(k-1)})\]
its charge-type, dual-charge-type.
We define $\mathcal{P}_i=(p_i^{(1)},\ldots,p_i^{(k-1)})$ by $p_i^{(s)}=r_i^{(s)}-r_i^{(s+1)}$ ($r_i^{(k)}=0$) 
for $i=1,\ldots,l$, $s=1,\ldots,k-1$, so that $p_i^{(s)}$ stands for the number of quasi-particles of color $i$ and charge $s$ in the monomial (\ref{vector}).
Set ${\mathcal P} = ({\mathcal P}_l,\ldots,{\mathcal P}_1)$.
To emphasize the dependence of $k$, we also write ${\mathcal P}^{(k-1)}$.
We consider the parafermionic space basis given in Theorem \ref{th:main3}.
From \eqref{conen3}, the conformal energy of 
\[
\psi_{n_{r_l^{(1)},l}\alpha_l,\ldots,n_{1,1}\alpha_1}^{\hat{\nu}}(m_{r_l^{(1)},l},\ldots,m_{1,1})
\]
is equal to
\begin{align}
&-\sum_{i=1}^l\left(\sum_{s=1}^{r_i^{(1)}}m_{s,i}+\sum_{s=1}^{r_i^{(1)}}\frac{n_{s,i}^2\rho_i}{k}+\sum_{s=1}^{r_i^{(1)}}\left(\sum_{t=1}^{s-1}\frac{2n_{t,i}n_{s,i}\rho_i}{k}+\sum_{j=1}^{i-1}\sum_{t=1}^{r_j^{(1)}}\frac{\pair{n_{s,i}(\alpha_i)_{(0)},n_{t,j}(\alpha_j)_{(0)}}}{k}\right)\right) \nonumber \\
&=-\sum_{i=1}^l\sum_{s=1}^{r_i^{(1)}}m_{s,i}-\frac12
\sum_{i,j=1}^l\sum_{s=1}^{r_i^{(1)}}\sum_{t=1}^{r_j^{(1)}}
\frac{\pair{n_{s,i}(\alpha_i)_{(0)},n_{t,j}(\alpha_j)_{(0)}}}{k}, \label{vecen}
\end{align}
where $\rho_i$ is defined in \eqref{rho}. Since
\[
\sum_{s=1}^{r_i^{(1)}}n_{s,i}=\sum_{s=1}^{k-1}sp_i^{(s)},
\]
\eqref{vecen} is further calculated as
\begin{equation} \label{vecen2}
-\sum_{i=1}^l\sum_{s=1}^{r_i^{(1)}}m_{s,i}-\frac12
\sum_{i,j=1}^l\sum_{s,t=1}^{k-1}
\frac{st}{k}\pair{(\alpha_i)_{(0)},(\alpha_j)_{(0)}}p_i^{(s)}p_j^{(t)}.
\end{equation}
%
%

To calculate the character of the parafermionic space, we use the corresponding result
of the principal subspace.
\begin{theorem} \label{th:BS2}
$($\cite[Theorem 6.1]{BS}$)$ 
For each of the affine Lie algebras $A_{2l-1}^{(2)}$, $D_{l+1}^{(2)}$, $E_6^{(2)}$, $D_4^{(3)}$, we have
\[{\rm ch}\,{W(k\La_0)}=\sum_{\mathcal{P}^{(k)}}\frac{q^{\frac1{2}\sum_{i,j=1}^l\pair{(\alpha_i)_{(0)},(\alpha_j)_{(0)}}\sum_{s,t=1}^k\min\{s,t\}p_i^{(s)}p_j^{(t)}}}{\prod_{i=1}^l\prod_{s=1}^{k} (q^{\rho_i})_{p_i^{(s)}}}\prod_{i=1}^l y_i^{\sum_{s=1}^ksp_i^{(s)}}\]
where the sum runs over all sequences ${\mathcal P}^{(k)}$ of $lk$ nonnegative integers.
\end{theorem}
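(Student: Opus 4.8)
The plan is to read the character off the quasi-particle basis $\mathcal{B}_W$ of Theorem \ref{th:BS}, organizing the basis monomials by combinatorial type. First I would fix a dual-charge-type $\mathcal{R}=(r_l^{(1)},\ldots,r_1^{(k)})$, equivalently the array $\mathcal{P}^{(k)}=(p_i^{(s)})_{1\le i\le l,\,1\le s\le k}$ with $p_i^{(s)}=r_i^{(s)}-r_i^{(s+1)}$ and $p_i^{(k)}=r_i^{(k)}$. This freezes all charges $n_{p,i}$ appearing in \eqref{b}, and hence the color-type, so every monomial of this type contributes $\prod_i y_i^{r_i}=\prod_i y_i^{\sum_{s}s\,p_i^{(s)}}$ by \eqref{col type}; what remains is to compute the $q$-generating function $\sum q^{-\sum_{p,i}m_{p,i}}$ over all energy sequences $(m_{p,i})$ obeying (C1)--(C3) for these fixed charges (the exponent being the energy $-\sum_{p,i}m_{p,i}$, since $x^{\hat\nu}_{n\alpha_i}(m)$ has $d$-degree $m$).

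Next I would single out the minimal-energy monomial of the type. For a fixed color $i$, the quasi-particles of a given charge form a consecutive block in \eqref{b}, and within such a block the right-hand side of (C2) decreases in precisely the steps $2\rho_i n_{p,i}$ prescribed by (C3); hence one may set every inequality in (C2) and (C3) to an equality simultaneously, obtaining a unique minimal monomial of energy $E_{\min}(\mathcal P)=-\sum_{p,i}B_{p,i}$, where $B_{p,i}$ denotes the bound in (C2) (with the convention $r_0^{(s)}=0$). Writing each $m_{p,i}$ as this tight value lowered by $\rho_i$ times a nonnegative integer, condition (C3) turns the excess inside a charge-$s$ block of color $i$ into a weakly increasing sequence $0\le\mu_1\le\cdots\le\mu_{p_i^{(s)}}$, while (C1) and (C2) then impose nothing further; since distinct charge values of a fixed color are linked by no condition and (C2) for color $i$ involves color $i-1$ only through its already fixed charges, the excesses over the pairs $(i,s)$ decouple completely. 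Therefore the energy generating function of the type is $q^{E_{\min}(\mathcal P)}\prod_{i=1}^{l}\prod_{s=1}^{k}(q^{\rho_i})_{p_i^{(s)}}^{-1}$, using $\sum_{0\le\mu_1\le\cdots\le\mu_p}q^{\rho(\mu_1+\cdots+\mu_p)}=1/(q^{\rho})_p$.

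It then remains to evaluate $E_{\min}(\mathcal P)$ and match it with the exponent in the statement. Summing $-B_{p,i}$ over $p$ gives $\rho_i\sum_p(2p-1)n_{p,i}+\pair{(\alpha_i)_{(0)},(\alpha_{i-1})_{(0)}}\sum_{p,q}\min\{n_{p,i},n_{q,i-1}\}$, and I would simplify this using the conjugate-partition identity $\sum_p(2p-1)n_{p,i}=\sum_s(r_i^{(s)})^2$, the identity $\sum_{p,q}\min\{n_{p,i},n_{q,i-1}\}=\sum_s r_i^{(s)}r_{i-1}^{(s)}$ coming from $\min\{a,b\}=\#\{s\ge1:s\le a,\,s\le b\}$, the relation $2\rho_i=\pair{(\alpha_i)_{(0)},(\alpha_i)_{(0)}}$ from \eqref{rho}, and the vanishing $\pair{(\alpha_i)_{(0)},(\alpha_j)_{(0)}}=0$ for $|i-j|\ge2$ (the diagram of $\tilde{\geh}[\nu]_{\bar0}$ is a chain), to reorganize $E_{\min}(\mathcal P)=\tfrac12\sum_{i,j}\pair{(\alpha_i)_{(0)},(\alpha_j)_{(0)}}\sum_s r_i^{(s)}r_j^{(s)}$. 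Finally $r_i^{(s)}=\sum_{u\ge s}p_i^{(u)}$ yields $\sum_s r_i^{(s)}r_j^{(s)}=\sum_{s,t}\min\{s,t\}p_i^{(s)}p_j^{(t)}$, so $q^{E_{\min}(\mathcal P)}$ is exactly the numerator in the formula; summing over all $\mathcal{P}^{(k)}$, i.e.\ over all $lk$-tuples of nonnegative integers, finishes the proof.

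The genuine difficulty lies in the decoupling asserted in the second paragraph: one must check that, after subtracting the minimal energies, (C1)--(C3) reduce \emph{precisely} to a product of independent ``partitions with at most $p_i^{(s)}$ parts'', with no residual cross-charge or cross-color constraint. This rests on the compatibility between the (C2)-bounds within a charge block and the (C3)-staircase, and is the twisted counterpart of the counting arguments of \cite{FS1,FS2,G1}; granting it, everything else is the bookkeeping above.
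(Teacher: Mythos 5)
This statement is not proved in the paper at all: it is quoted verbatim from \cite[Theorem 6.1]{BS}, and the paper only uses it downstream to extract the parafermionic character. Your proposal is, in effect, a reconstruction of how Butorac--Sadowski themselves derive that character from their quasi-particle basis (the paper's Theorem \ref{th:BS}): fix the dual-charge-type $\mathcal{P}^{(k)}$, identify the minimal-energy monomial by saturating (C2)--(C3), observe that the excess energies factor into independent partition-counting series $1/(q^{\rho_i})_{p_i^{(s)}}$, and convert the minimal energy via $\sum_p(2p-1)n_{p,i}=\sum_s(r_i^{(s)})^2$, $\sum_{p,q}\min\{n_{p,i},n_{q,i-1}\}=\sum_s r_i^{(s)}r_{i-1}^{(s)}$ and $\sum_s r_i^{(s)}r_j^{(s)}=\sum_{s,t}\min\{s,t\}p_i^{(s)}p_j^{(t)}$ -- exactly the identities used in \cite{BS} (they even appear in a commented-out passage of this paper's source). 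The decoupling you single out as the genuine difficulty does hold for precisely the reason you indicate: within a block of fixed color $i$ and fixed charge $s$ the (C2) bound drops by exactly $2\rho_i s$ at each step, which is the (C3) increment, no condition links quasi-particles of the same color with different charges, and (C2) couples color $i$ only to the (frozen) charges of color $i-1$, so after subtracting the tight values the excesses over the pairs $(i,s)$ are independent weakly increasing sequences. The only detail you pass over silently is that the right-hand side of (C2) must itself lie in $\rho_i\Z$, so that (C1) imposes no extra shift and the excess is a genuine multiple of $\rho_i$; this is a short case check in each of the four types (e.g.\ $\pair{(\alpha_i)_{(0)},(\alpha_{i-1})_{(0)}}\in\rho_i\Z$ throughout) and is implicit in \cite{BS}. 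With that caveat your argument is correct and coincides with the cited proof rather than offering a different route.
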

Note that in this formula, we use the index set $\mathcal{P}^{(k)}$ rather than 
$\mathcal{P}^{(k-1)}$, since the basis vectors contain quasi-particles of charge $k$
by Theorem \ref{th:BS}. Now we can obtain the character of the parafermionic space $L(k\La_0)_{kQ}^{\hat{\heh}[\nu]^+}$.
\begin{theorem}
For each of the affine Lie algebras $A_{2l-1}^{(2)}$, $D_{l+1}^{(2)}$, $E_6^{(2)}$, $D_4^{(3)}$, we have
\[{\rm ch}\,{L(k\La_0)}_{kQ}^{\hat{\heh}[\nu]^+}=\sum_{\mathcal{P}^{(k-1)}}\frac{q^{\frac1{2}\sum_{i,j=1}^l\pair{(\alpha_i)_{(0)},(\alpha_j)_{(0)}}\sum_{s,t=1}^{k-1}D_{s,t}^{(k)}p_i^{(s)}p_j^{(t)}}}{\prod_{i=1}^l\prod_{s=1}^{k-1} (q^{\rho_i})_{p_i^{(s)}}}\prod_{i=1}^l y_i^{\sum_{s=1}^{k-1}sp_i^{(s)}}\]
where the sum runs over all sequences ${\mathcal P}^{(k-1)}$ of $l(k-1)$ nonnegative integers and
\[
D_{s,t}^{(k)}={\rm min}\{s,t\}-\frac{st}{k}.
\]
\end{theorem}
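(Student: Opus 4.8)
The plan is to compute the character directly from the basis of $L(k\La_0)_{kQ}^{\hat{\heh}[\nu]^+}$ supplied by Theorem~\ref{th:main3}, organizing the sum according to the datum $\mathcal{P}^{(k-1)}$ and splitting the conformal energy of a basis vector into an ``energy part'' coming from the modes $m_{s,i}$ and a correction term that depends only on $\mathcal{P}^{(k-1)}$.

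By Theorem~\ref{th:main3}, a basis of $L(k\La_0)_{kQ}^{\hat{\heh}[\nu]^+}$ consists of the vectors $\psi_{\mathcal{R}'}^{\hat{\nu}}(m_{r_l^{(1)},l},\ldots,m_{1,1})v_0$ whose charge-type $\mathcal{R}'$ and modes satisfy the conditions $(C1)$--$(C3)$ defining $B'_W$; since no quasi-particle has charge $k$, these are parametrized by the sequences $\mathcal{P}^{(k-1)}$ of $l(k-1)$ nonnegative integers $p_i^{(s)}$ ($1\le i\le l$, $1\le s\le k-1$) together with, for each $\mathcal{P}^{(k-1)}$, the admissible mode sequences $(m_{s,i})$. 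By \eqref{vecen2} the conformal energy of such a vector is
\[
-\sum_{i=1}^l\sum_{s=1}^{r_i^{(1)}}m_{s,i}\;-\;\frac12\sum_{i,j=1}^l\sum_{s,t=1}^{k-1}\frac{st}{k}\pair{(\alpha_i)_{(0)},(\alpha_j)_{(0)}}p_i^{(s)}p_j^{(t)},
\]
and its color-type is $r_i=\sum_{s=1}^{k-1}sp_i^{(s)}$, so both the second summand and $(r_1,\ldots,r_l)$ depend on $\mathcal{P}^{(k-1)}$ only. Hence
\[
{\rm ch}\,L(k\La_0)_{kQ}^{\hat{\heh}[\nu]^+}=\sum_{\mathcal{P}^{(k-1)}}q^{-\frac12\sum_{i,j}\sum_{s,t}\frac{st}{k}\pair{(\alpha_i)_{(0)},(\alpha_j)_{(0)}}p_i^{(s)}p_j^{(t)}}\;\Bigl(\prod_{i=1}^l y_i^{\sum_{s=1}^{k-1}sp_i^{(s)}}\Bigr)\sum_{(m_{s,i})}q^{-\sum_{i,s}m_{s,i}},
\]
where $(m_{s,i})$ runs over all mode sequences compatible with the fixed $\mathcal{P}^{(k-1)}$ and obeying $(C1)$--$(C3)$.

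The remaining step is to evaluate the innermost sum. This is exactly the generating function over energies that is computed in \cite{BS} while proving Theorem~\ref{th:BS2}, the only change being that here the charges are bounded by $k-1$ instead of $k$; it equals
\[
\frac{q^{\frac12\sum_{i,j=1}^l\pair{(\alpha_i)_{(0)},(\alpha_j)_{(0)}}\sum_{s,t=1}^{k-1}\min\{s,t\}\,p_i^{(s)}p_j^{(t)}}}{\prod_{i=1}^l\prod_{s=1}^{k-1}(q^{\rho_i})_{p_i^{(s)}}},
\]
where the numerator is the minimal value of $-\sum_{i,s}m_{s,i}$ allowed by $(C1)$--$(C3)$, rewritten via $\sum_{s=1}^{k-1}r_i^{(s)}r_j^{(s)}=\sum_{s,t=1}^{k-1}\min\{s,t\}\,p_i^{(s)}p_j^{(t)}$ and $\pair{(\alpha_i)_{(0)},(\alpha_i)_{(0)}}=2\rho_i$, and the denominator records the free choices of the modes above the minimum. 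Substituting this into the previous display and merging the two powers of $q$ into $\frac12\sum_{i,j}\pair{(\alpha_i)_{(0)},(\alpha_j)_{(0)}}\sum_{s,t=1}^{k-1}\bigl(\min\{s,t\}-\tfrac{st}{k}\bigr)p_i^{(s)}p_j^{(t)}$ yields the asserted formula with $D_{s,t}^{(k)}=\min\{s,t\}-st/k$.

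I expect the main obstacle to be precisely this middle step: one has to verify that the energy summation of \cite{BS} carries over verbatim once charges are capped at $k-1$, so that the index set becomes $\mathcal{P}^{(k-1)}$ and the ranges of $s,t$ shrink to $\{1,\ldots,k-1\}$ while the shape of the summand (the numerator exponent in terms of the dual-charge-type, and the $(q^{\rho_i})$-denominators) is unchanged. The rest is bookkeeping with the already established \eqref{vecen2} and Theorem~\ref{th:BS2}.
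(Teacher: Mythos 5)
Your proposal is correct and follows essentially the same route as the paper: both compute the character from the Theorem \ref{th:main3} basis, split the conformal energy via \eqref{vecen2} into the mode sum and the $\mathcal{P}^{(k-1)}$-dependent shift, and evaluate the mode-sum generating function by comparing Theorems \ref{th:BS} and \ref{th:BS2} (i.e., the principal-subspace formula of \cite{BS} with charges capped at $k-1$), after which the two $q$-exponents merge into $D_{s,t}^{(k)}=\min\{s,t\}-st/k$. The step you flag as the main obstacle is exactly the step the paper handles by this comparison, so no further idea is needed.
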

\begin{proof}
Comparing Theorems \ref{th:BS} and \ref{th:BS2}, one can readily calculate the 
generating function with weight given by the first term of \eqref{vecen2} as
\[
\sum_{\mathcal{P}^{(k-1)}}\frac{q^{\frac1{2}\sum_{i,j=1}^l\pair{(\alpha_i)_{(0)},(\alpha_j)_{(0)}}\sum_{s,t=1}^{k-1}{\rm min}\{s,t\}p_i^{(s)}p_j^{(t)}}}{\prod_{i=1}^l\prod_{s=1}^{k-1}(q^{\rho_i}
)_{p_i^{(s)}}}\prod_{i=1}^l y_i^{\sum_{s=1}^{k-1}sp_i^{(s)}}.
\]
Taking the conformal shift, namely the second term of \eqref{vecen2}, into account,
we obtain the desired formula.
\end{proof}
\begin{remark}
We compare this result with \cite[Conjecture 5.3]{HKOTT}. First, note that due to the
difference of the normalization of the bilinear form $\pair{\,,\,}$ on the root lattice $Q$,
$q$ in this paper should be replaced with $q^r$ to compare with \cite{HKOTT}. 
Recalling $r\rho_i=t_i^{\lor}$ and removing the factor corresponding to the Heisenberg
subalgebra $\hat{\heh}[\nu]_{\frac1{r}\Z}$, we see that the above formula is consistent
with Conjecture 5.3 in \cite{HKOTT}.
\end{remark}

\section*{Acknowledgments}
M.O. is supported by Grants-in-Aid for Scientific Research No.~19K03426
and No.~16H03922 from JSPS.
This work was partly supported by Osaka City University Advanced Mathematical Institute (MEXT Joint Usage/Research Center on Mathematics and Theoretical Physics JPMXP0619217849).

\end{document}